\newtheorem{theorem}{Theorem}[section]
\newtheorem{lemma}[theorem]{Lemma}
\newtheorem{proposition}[theorem]{Proposition}
\theoremstyle{remark}
\newtheorem{remark}[theorem]{Remark}
\theoremstyle{definition}
\newtheorem{definition}[theorem]{Definition}
\numberwithin{equation}{section}
\newcommand{\A}{{\mathsf A}}
\newcommand{\D}{{\mathsf D}}
\newcommand{\M}{{\mathsf M}}
\newcommand{\T}{{\mathbb T}}
\newcommand{\I}{{\mathds {1}}}
\begin{document}

\title[A noncommutative Helson-Szeg\"o theorem]{A Helson-Szeg\"o theorem for subdiagonal subalgebras with applications to Toeplitz operators}
\author{Louis E Labuschagne}
\address{Internal Box 209, School of Comp., Stat. \& Math. Sci., NWU, Pvt. Bag X6001, 2520 Potchefstroom, South
Africa}
\email{louis.labuschagne@nwu.ac.za}

\author{Quanhua Xu}
\address{School of Mathematics and Statistics, Wuhan University, Wuhan 430072, China and  Laboratoire de Math\'ematiques, Universit\'e de Franche-Comt\'e, 25030 Besancon, cedex-France}
\email{qxu@univ-fcomte.fr}

\subjclass[2010]{Primary: 46L52. Secondary: 47L38, 42C99}
\keywords{Helson-Szeg\"o theorem, angle between past and future, invertibility of Toeplitz operators, subdiagonal algebras,  Hardy spaces, outer operators, Riesz-Szeg\"o factorisation}


\maketitle

\begin{abstract}
We formulate and establish a noncommutative version of the well known Helson-Szeg\"o theorem about the angle between past and future for subdiagonal subalgebras. We then proceed to use this theorem to characterise the symbols of invertible Toeplitz operators on the noncommutative Hardy spaces associated to subdiagonal subalgebras.
\end{abstract}

\bigskip



\section{Introduction}


Let $\T$ be the unit circle of the complex plane equipped with normalised Lebesgue measure $dm$. We denote by $H^p(\T)$ the usual Hardy spaces on $\T$. Let $P_+$ be the orthogonal projection from $L^2(\T)$ onto $H^2(\T)$. The classical Helson-Szeg\"o theorem \cite{HSz} (see also \cite[section~IV.3]{gar}) characterises those positive measures $\mu$ on $\T$ such that $P_+$ is bounded on $L^2(\T, \mu)$. The condition is that $\mu$ is absolutely continuous with respect to $dm$ and the corresponding Radon-Nikod\'ym derivative $w$ satisfies
 \begin{equation}\label{hs-cond}
 w=e^{u+\widetilde v}\;\textrm{ for two functions }\; u, v\in L^\infty(\T) \;\textrm{ with }\; \|\widetilde v\|_\infty<\pi/2,
 \end{equation}
where $\widetilde v$ denotes the conjugate function of $v$. 

The motivation of this theorem comes from univariate prediction theory. Let $\mathcal P_+$ denote the space of all polynomials in $z$, and $\mathcal P_-$ the space of all polynomials in $\bar z$ without constant term. $\mathcal P=\mathcal P_++\mathcal P_-$ is the space of all trigonometric polynomials. Then $P_+$ is bounded on $L^2(\T,\mu)$ if and only if $\mathcal P_+$ and $\mathcal P_-$ are at positive angle in $L^2(\T,\mu)$. Recall that the angle between  $\mathcal P_+$ and $\mathcal P_-$ is defined as ${\rm arccos}$ of the following quantity
 $$\rho=\sup\big\{\big|\int_{\T}f\bar g d\mu\big|: f\in \mathcal P_+,  g\in \mathcal P_-, \|f\|_{L^2(\T,\mu)}=\|g\|_{L^2(\T,\mu)}=1\big\}.$$
Thus $P_+$ is bounded on $L^2(\T,\mu)$ if and only if $\rho<1$.

In multivariate prediction theory one needs to consider the matrix-valued extension of the Helson-Szeg\"o theorem. Let $\mathbb M_n$ denote the full algebra of complex $n\times n$-matrices, equipped with the normalised trace ${\rm tr}$. Let $\mathcal P_+(\mathbb M_n)$ denote the space of all polynomials in $z$ with coefficients in $\mathbb M_n$.  $\mathcal P_-(\mathbb M_n)$ and  $\mathcal P(\mathbb M_n)$ have similar meanings.  Let $w$ be an $\mathbb M_n$-valued weight on $\T$, i.e. $w$ is an integrable function on $\T$ with values in the family of semidefinite nonnegative matrices. For any trigonometric polynomials $f$ and $g$ in $\mathcal P(\mathbb M_n)$  define
 $$\langle f,\, g\rangle_w=\int_{\T}{\rm tr}(g^*fw)dm\quad\textrm{and}\quad \|f\|_w=\langle f,\, f\rangle_w^{1/2},$$
where $a^*$ denotes the adjoint of a matrix $a$. Like in the scalar case, define
 $$\rho=\sup\big\{\big|\int_{\T}{\rm tr}(g^*fw)dm\big|:  
 f\in\mathcal P_+(\mathbb M_n) , g\in\mathcal P_-(\mathbb M_n), \|f\|_{w}=\|g\|_{w}=1\big\}.$$
Again, $\rho<1$ if and only if $P_+\otimes {\rm Id}_{\mathbb M_n}$ is bounded on $\mathcal P(\mathbb M_n)$  with respect to $\|\,\|_w$. The problem here is, of course, to characterise $w$ such that $\rho<1$ in a way similar to the scalar case.  This time the task is much harder, and it is impossible to find a characterisation as nice as \eqref{hs-cond}. Numerous works have been devoted to this subject, see, for instance \cite{BruDom, bekU, Dom, Pour, P2, TV}. In particular, Pousson's characterisation in \cite{P2} is the matrix-valued analogue of a key intermediate step to \eqref{hs-cond}. It is strong enough for applications to the invertibility of Toeplitz operators.

The preceding two cases can be put into the more general setting of subdiagonal algebras in the sense of \cite{ar}. We will provide an extension of the Helson-Szeg\"o theorem in this general setting. This is the first objective of the paper.

Our second objective is to study the invertibility of Toeplitz operators. It is well known that the Helson-Szeg\"o theorem is closely related to the invertibility of Toeplitz operators. This relationship was remarkably exploited by Devinatz \cite{Dev}. Pousson \cite{P1, P2} then subsequently extended Devinatz's work to the matrix-valued case. Using our extension of the Helson-Szeg\"o theorem, we will characterise the symbols of invertible Toeplitz operators in the very general setting of subdiagonal algebras.

We end this introduction by mentioning the link between the Helson-Szeg\"o theorem and Muckenhoupt's $A_2$ weights. Let $w$ be a weight on $\T$. Hunt,  Muckenhoupt and  Wheeden \cite{HMW} proved that the Riesz projection $P_+$ is bounded on $L^2(\T, w)$ if and only if 
  \begin{equation}\label{A2}
  \sup\frac1{|I|}\int_Iw\, \frac1{|I|}\int_Iw^{-1}<\infty,
  \end{equation}
where the supremum runs over all arcs of $\T$. Such a $w$ is called an $A_2$-weight. Thus for a weight $w$  the two conditions \eqref{hs-cond} and \eqref{A2} are equivalent via the boundedness of the Riesz projection. It seems that it is still an open problem to find a direct proof of this equivalence.

Hunt,  Muckenhoupt and  Wheeden's theorem was extended to the matrix-valued case by Treil and Volberg \cite{TV}. Namely,  let $w$ now be an $\mathbb M_n$-valued weight on $\T$. Then 
 $P_+\otimes {\rm Id}_{\mathbb M_n}$ is bounded on $\mathcal P(\mathbb M_n)$  with respect to $\|\,\|_w$ if and only if 
 $$\sup_I\Big\|\Big(\frac1{|I|}\int_Iw\Big)^{1/2}\,\Big(\frac1{|I|}\int_Iw^{-1}\Big)\,
 \Big(\frac1{|I|}\int_Iw\Big)^{1/2}\Big\|_{\mathbb M_n}<\infty.$$

It is not clear for us how to extend Treil and Volberg's theorem to the case of subdiagonal algebras. On the other hand, Hunt,  Muckenhoupt and  Wheeden also characterised the boundedness of $P_+$ on $L^p(\T,w)$ for any $1<p<\infty$ by the so-called $A_p$ weights. A well known open problem in matrix-valued harmonic analysis is to extend this result to the matrix-valued case; even to the very general one of subdiagonal algebras.


\section{Preliminaries}


Throughout the paper $\M$ will be a von Neumann algebra possessing a faithful normal
tracial state $\tau$. The associated noncommutative $L^p$-spaces are denoted by $L^p(\M)$.
We refer to \cite{px} for noncommutative integration. For a subset $S$ of
$L^p(\M)$, we will write $[S]_p$ for the closure of $S$ in the $L^p$-topology. On the other hand,
$S^*$ will denote the set of all Hilbert-adjoints of elements of $S$. When an actual Banach dual of some
Banach space is in view, we will for the sake of avoiding confusion prefer
the superscript $\star\,$. For example the dual of $\M$ will be denoted by
$\M^\star$. Because $\M$ is finite, there will for any von Neumann subalgebra
$\mathsf N$ of $\M$, always exist a normal contractive projection $\Psi: \M
\to \mathsf N$ satisfying $\tau \circ \Psi = \tau$. This is the so-called normal faithful  conditional expectation onto $\mathsf N$ with respect to
$\tau$.

A {\em finite subdiagonal algebra} of $\M$ is a weak* closed unital subalgebra $\A$ of
$\M$ satisfying the following conditions
 \begin{itemize}
 \item $\A + \A^*$ is weak* dense in $\M$;
 \item the trace preserving conditional expectation $\Phi : \M \to \A \cap \A^*  = \D$ is multiplicative on $\A$:
 $$
 \Phi(ab) = \Phi(a)  \Phi(b) , \quad a, b \in \A .
 $$
 \end{itemize}
In this case, $\D$ is called the {\em diagonal} of $\A$. We also set $\A_0 = \A \cap {\rm Ker}(\Phi)$.  In the sequel, $\A$ will always denote a finite subdiagonal algebra of $\M$.

Subdiagonal algebras are our noncommutative $H^\infty$'s. The most important example is, of course, the classical $H^\infty(\T)$ on the unit circle. Another example important for multivariate prediction theory is the matrix-valued $H^\infty(\T)$. More precisely, let $\M=L^\infty(\mathbb
T)\otimes\mathbb M_n=L^\infty(\mathbb T;\mathbb M_n)$ equipped with the product trace, and let $\A=H^\infty(\mathbb T;\mathbb M_n)$ -- the subalgebra of $\M$ consisting of $n\times n$-matrices with entries in $H^\infty(\T)$. Many classical results about Hardy spaces on $\T$ have been transferred to the matrix-valued case. A third example is the upper triangle subalgebra $\T_n$ of $\mathbb M_n$. This example is closely related to the second one, and is a finite dimensional nest algebra. We refer to \cite[\S 8]{px} for more information and historical references on subdiagonal algebras, in particular, on matrix-valued analytic functions.

For $p<\infty$ the Hardy space $H^p(\A)$ associated with a finite subdiagonal algebra $\A$ is defined to be $[\A]_p$. The closure of $\A_0$ in $L^p(\M)$ will be denoted by $H^p_0(\M)$. By convention, we put $H^\infty(\A)=\A$ and $H^\infty_0(\A)=\A_0$. These spaces exhibit many of the properties of classical $H^p$ spaces (see \cite{bx, BL3, BLsur, MW1, ran, S}). In particular for $1<p<\infty$, $L^p(\M)$ appears as the Banach space direct sum of $H^p(\M)$ and $H^p_0(\M)^*$, with $H^p(\M)$ appearing as the Banach space direct sum of $H^p_0(\M)$ and $L^p(\D)$. In the case $p=2$, these direct sums are even orthogonal direct sums.

Recall that if a weight $w$ on $\T$ satisfies \eqref{hs-cond}, then necessarily $\log w\in L^1(\T)$, or equivalently,
  \begin{equation}\label{g-mean}
  \exp\big(\int_\T \log w\big)>0.
  \end{equation}\label{hs}
The integrability of $\log w$ is also equivalent to the existence of an outer function $h\in H^1(\T)$ such that $w=|h|$. To state the outer-inner factorisation and prove the Helson-Szeg\"o analogue for subdiagonal algebras, we need an appropriate substitute of the latter condition. This is achieved by the \emph{Fuglede-Kadison determinant}. Recall that the Fuglede-Kadison determinant $\Delta(a)$
of an operator $a\in L^p(\M)$ ($p>0$) can be defined by
 $$\Delta(a)=\exp\big(\tau(\log|a|)\big)
 =\exp\big(\int_0^\infty\log t\,d\nu_{|a|}(t)\big),$$
where $d\nu_{|a|}$ denotes the probability measure on $\mathbb R_+$
which is obtained by composing the spectral measure of $|a|$ with
the trace $\tau$. It is easy to check that
 $$\Delta(a)=\lim_{p\to0}\|a\|_p\quad\textrm{and}\quad 
 \Delta(a) = \inf_{\epsilon > 0}\exp\tau(\log(|a|+\epsilon \I))\,.$$
As the usual determinant of matrices, $\Delta$ is also
multiplicative: $\Delta(ab)=\Delta(a)\Delta(b)$. We refer the reader for
information on determinant to \cite{fug-kad, ar} in the
case of bounded operators, and to \cite{Br, HS} for
unbounded operators.

Return to our Hardy spaces. An element $h$ of $H^p(\M)$ with $p<\infty$ is said to be an {\em outer} element if $h\A$ is dense in $H^p(\M)$. If in addition $\Delta(h)>0$, we call such an $h$ {\em strongly outer}. For an analysis of outer elements in the present context, we refer the interested reader to \cite{BL3} for $p\ge 1$ and \cite{bx} for $p<1$. We will however pause to summarise the essential points of the theory. For any outer element $h$ of $H^p(\M)$, both $h$ and $\Phi(h)$ necessarily have dense range and trivial kernel. Hence their inverses exist as affiliated operators. For such an outer element, we also necessarily have that $\Delta(h)=\Delta(\Phi(h))$. If indeed $\Delta(h)>0$, the equality $\Delta(h)=\Delta(\Phi(h))$ is sufficient for $h$ to be outer. Using this fact it is now an easy exercise to see that if $\Delta(h)>0$, then $h$ is an outer element of $H^p(\M)$ if and only if $h^*$ is an outer element of $H^p(\M)^*$ if and only if $h$ is right outer in the 
 sense that $\A h$ will also be dense in $H^p(\M)$. In this theory one also has a type of noncommutative Riesz-Szeg\"o theorem, in that any $f\in L^p(\M)$ for which $\Delta(f)>0$, may be written in the form $f=uh$ where $u\in\M$ is unitary and $h\in H^p(\M)$ an outer element of $H^p(\M)$.

Given a state $\omega$ on $\M$, we write $(\pi_\omega, L^2(\omega),  \Omega_\omega)$ for the cyclic representation associated to $\omega$. The subspaces $\A^*$ and $\A_0$ embed canonically into $L^2(\omega)$ by means of the operation $a \mapsto \pi_\omega(a)\Omega_\omega$. The angle between $\A^*$ and $\A_0$ in $L^2(\omega)$ is defined to be that between the closed subspaces  $\overline{\pi_\omega(\A^*)\Omega_\omega}$ and $\overline{\pi_\omega(\A_0)\Omega_\omega}$. The latter is equal to $\arccos \rho$ with $\rho$ given by
 $$\rho = \sup\{|\langle\pi_\omega(a)\Omega_\omega, \pi_\omega(b)\Omega_\omega\rangle| : 
 a \in \A_0, b \in \A^*, \|\pi_\omega(a)\Omega_\omega\| \leq 1, \|\pi_\omega(b)\Omega_\omega\| \leq 1\}.$$
In view of the fact that $\langle\pi_\omega(a)\Omega_\omega, \pi_\omega(b)\Omega_\omega\rangle = \omega(b^*a)$, this may be rewritten as
 $$\rho = \sup\{|\omega(b^*a)| : a \in\A_0, b \in \A^*, \omega(|a|^2) \leq 1, \omega(|b|^2) \leq 1\}.$$
In general $0 \leq \rho \leq 1$. $\A^*$ and $\A_0$ are said to be at positive angle in  $L^2(\omega)$ if $\rho<1$. Let $P_+$ be the orthogonal projection from $L^2(\M)$ onto $H^2(\M)$. It is then clear that $P_+$ defines a bounded operator on  $L^2(\omega)$ if and only if $\rho<1$. 


\section{A noncommutative Helson-Szeg\"o theorem}


In this section we present our noncommutative Helson-Szeg\"o theorem. This theorem will prove to be an important ingredient in our onslaught on Toeplitz operators in the next section. As recalled previously, the classical Helson-Szeg\"o theorem  contains the information that any finite Borel measure for which the angle between $\A$ and $\A^*_0$ is positive must necessarily be absolutely continuous with respect to Lebesgue measure, and moreover that the Radon-Nikod\'ym derivative of this measure must have a strictly positive geometric mean \eqref{g-mean}. Before presenting our noncommutative Helson-Szeg\"o theorem, we first show that under some mild restrictions the same claims are true in the noncommutative case. $L^p_+(\M)$ will denote the positive cone of $L^p(\M)$.

\begin{proposition}\label{prop1}
Let $\D = \A \cap \A^*$ be finite dimensional, and let $\omega$ be a state on $\M$ for which $\rho < 1$. Then $\omega$ is of the form $\omega = \tau(g \cdot)$ for some $g \in L^1_+(\M)$.
\end{proposition}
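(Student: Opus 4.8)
The plan is to show that the positive angle forces the singular part of $\omega$ to vanish. Write $\omega = \omega_n + \omega_s$ for the (unique) decomposition of the state $\omega$ into a normal positive functional $\omega_n$ and a singular positive functional $\omega_s$ on $\M$. Since the normal positive functionals on $\M$ are exactly those of the form $\tau(g\,\cdot)$ with $g \in L^1_+(\M)$, the assertion is equivalent to the claim that $\omega_s = 0$. I would argue by contradiction: assuming $\|\omega_s\| > 0$, I would manufacture elements of $\A_0$ and $\A^*$ witnessing $\rho \geq 1$.

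The starting point is the standard description of singularity: there is a decreasing net of projections $p_i \downarrow 0$ in the $\sigma$-strong topology with $\omega_s(p_i) = \|\omega_s\|$ for all $i$ while $\omega_n(p_i) \to 0$, so that $\omega_s$ is \emph{carried at infinity} relative to $\omega_n$ and to the reference trace $\tau$. The technical heart of the proof is to replace these projections by analytic elements: I would construct a bounded net $\tilde a_i \in \A$ that tends to $0$ $*$-strongly with respect to $\tau$ while tending to $1$ in $L^2(\omega_s)$ (in both the row and column senses, i.e.\ $\omega_s(|\tilde a_i - 1|^2) \to 0$ and $\omega_s(|\tilde a_i^* - 1|^2) \to 0$). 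This is the noncommutative analogue of the classical fact that a singular measure admits bounded analytic peaking functions that are $\approx 1$ on a carrier of the singular mass and $\approx 0$ Lebesgue-almost everywhere, and it would be modeled on the outer-element and Riesz--Szeg\"o factorisation theory recalled in Section~2. To land inside $\A_0$ rather than merely $\A$, set $a_i = \tilde a_i - \Phi(\tilde a_i)$; here the hypothesis that $\D$ is finite dimensional is exactly what I would use, since $*$-strong convergence $\tilde a_i \to 0$ forces $\Phi(\tilde a_i) \to 0$ $\sigma$-strongly in the finite-dimensional space $\D$, hence in operator norm, so that the diagonal correction is negligible in every relevant topology. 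Consequently $a_i \in \A_0$ is bounded, $a_i \to 0$ in $L^2(\omega_n)$, and $a_i \to 1$ in $L^2(\omega_s)$.

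With these elements in hand the contradiction is a direct computation. Put $b_i = a_i^* \in \A_0^* \subseteq \A^*$. Using $\omega = \omega_n + \omega_s$, the uniform bound on the $a_i$, and the Cauchy--Schwarz inequality for $\omega_n$, one checks that $\omega(a_i^* a_i) \to \|\omega_s\|$, that $\omega(b_i^* b_i) = \omega(a_i a_i^*) \to \|\omega_s\|$, and that $\omega(b_i^* a_i) = \omega(a_i^2) \to \omega_s(\I) = \|\omega_s\|$, the normal parts contributing nothing in the limit because $a_i \to 0$ in $L^2(\omega_n)$. After normalising $a_i$ and $b_i$ in $L^2(\omega)$ --- legitimate since their norms tend to the nonzero number $\|\omega_s\|^{1/2}$ --- the quotient defining $\rho$ tends to $\|\omega_s\|/\big(\|\omega_s\|^{1/2}\|\omega_s\|^{1/2}\big) = 1$, whence $\rho \geq 1$, contradicting $\rho < 1$. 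Therefore $\omega_s = 0$, and $\omega = \omega_n = \tau(g\,\cdot)$ for some $g \in L^1_+(\M)$.

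The main obstacle is plainly the middle step: producing analytic (indeed $\A_0$-valued) peaking elements attached to the singular part of $\omega$. In the commutative setting this rests on delicate boundary behaviour of outer functions on Lebesgue-null carriers, and its transcription to a general finite subdiagonal algebra --- where there is no circle, no Poisson kernel, and only the abstract factorisation theory at one's disposal --- is where the real work lies. I expect the finite-dimensionality of $\D$ to be indispensable precisely here and in the diagonal correction, and I would not be surprised if the cleanest route isolates this construction as a separate lemma (a singularity/peak statement for subdiagonal algebras) rather than carrying it out inline.
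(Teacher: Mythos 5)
Your overall strategy is the right one and matches the paper's: decompose $\omega=\omega_n+\omega_s$, observe that normal positive functionals are exactly the $\tau(g\,\cdot)$ with $g\in L^1_+(\M)$, and kill $\omega_s$ by producing peak-type elements of $\A$ attached to the singular part, correcting them into $\A_0$ by subtracting $\Phi(\tilde a_i)$ --- which is where finite-dimensionality of $\D$ enters, exactly as you say. But the step you defer is the entire content of the argument, and the tool you gesture at is the wrong one. The outer-element and Riesz--Szeg\"o factorisation theory of Section~2 does not produce such peaking elements; what the paper invokes is Ueda's noncommutative peak-set theorem \cite[Theorem~1]{Ueda}, which supplies a single contraction $a\in\A$ and a projection $e\in\M^{\star\star}$ with $a^n\to e$ weak* in $\M^{\star\star}$, $a^n\to 0$ weak* in $\M$, and $\omega_s(e)=\omega_s(\I)$. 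This is a genuinely deep external result (the noncommutative F.\ and M.\ Riesz/peak-set phenomenon), not something recoverable from factorisation, so as written your proof has a hole precisely where you predicted the ``real work'' lies.

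There is a second, more technical problem: even granting a peak element in Ueda's sense, you only get \emph{weak*} convergence $a^n\to e$, whereas your closing computation requires $\omega_s(|\tilde a_i-\I|^2)\to 0$ and $\omega_s(|\tilde a_i^*-\I|^2)\to 0$, i.e.\ norm convergence in $L^2(\omega_s)$ in both the row and column senses. Weak* convergence does not give $\omega(a_i^*a_i)\to\|\omega_s\|$, so the limits $\omega(a_i^*a_i),\ \omega(a_ia_i^*),\ \omega(a_i^2)\to\|\omega_s\|$ that drive your estimate $\rho\ge 1$ are not justified. The paper's argument is structured to avoid exactly this: it passes to the GNS space, notes that $\rho<1$ forces $\overline{\pi_\omega(\A_0)\Omega_\omega}\cap\overline{\pi_\omega(\A^*)\Omega_\omega}=\{0\}$ (a weaker consequence of positive angle, but one that only needs \emph{weak} limits, since closed subspaces are weakly closed), shows that the weak limit $\widetilde e\,\Omega_\omega$ of both $\pi_\omega(a^n-\Phi(a^n))\Omega_\omega$ and $\pi_\omega((a^*)^n)\Omega_\omega$ lies in that intersection and hence vanishes, and then contradicts $\langle\widetilde e\,\Omega_\omega,e_0^\perp\Omega_\omega\rangle=\omega_s(\I)>0$ using the central projection $e_0$ separating the normal and singular parts \cite[III.2.14]{Tak1}. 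If you import Ueda's theorem and restructure your endgame along these lines --- aim for a nonzero vector in the intersection of the two subspaces rather than a quantitative lower bound on $\rho$ --- your proof closes.
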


\begin{proof}
 We keep the notation introduced at the end of the previous section. Let $\omega_n$ and $\omega_s$ respectively be the normal and singular parts of  $\omega$. Firstly note that by \cite[III.2.14]{Tak1}, there exists a central projection $e_0$ in
$\pi_{\omega}(\M)''$ such that for any $\xi, \psi \in
L^2(\omega)$ the functionals $a \mapsto
\langle\pi_{\omega}(a)e_0\xi, \,\psi\rangle$ and $a \mapsto
\langle\pi_{\omega}(a)e_0^\perp\xi,\, \psi\rangle$ on $\M$ are
respectively the normal and singular parts of the functional $a
\mapsto \langle\pi_{\omega}(a)\xi, \,\psi\rangle$, where $e_0^\perp= \I-e_0$.  In particular,
the triples $(e_0\pi_{\omega},
e_0L^2(\omega), e_0\Omega_{\omega})$ and
$(e_0^\perp\pi_{\omega}, e_0^\perp L^2(\omega),
e_0^\perp\Omega_{\omega})$ are copies of the GNS representations
of $\omega_n$ and $\omega_s$ respectively.

Since $\rho<1$,  we must have that 
 $$\overline{\pi_\omega(\A_0)\Omega_\omega} \cap \overline{\pi_\omega(\A^*)\Omega_\omega} = \{0\}.$$
Now suppose that the singular part $\omega_s$ of $\omega$ is nonzero. By Ueda's noncommutative peak-set theorem \cite[Theorem~1]{Ueda} there exist an orthogonal projection $e$ in the second dual $\M^{\star\star}$ of $\M$ and a contractive element $a$ of $\A$ so that
\begin{itemize}
\item $a^n$ converges to $e$ in the weak*-topology on $\M^{\star\star}$;
\item $\omega_s(e) = \omega_s(\I)$ (here $\omega_s$ is identified with its canonical extension to $\M^{\star\star}$);
\item $a^n$ converges to 0 in the weak*-topology on $\M$.
\end{itemize}
Since the expectation $\Phi$ is weak*-continuous on $\M$, $\Phi(a^n)$ is weak* convergent to 0. But then the finite dimensionality of $\D$ ensures that $\Phi(a^n)$ converges to 0 in norm.

Recall that the bidual $\M^{\star\star}$ of $\M$ may be represented as the double commutant of $\M$ in its universal representation. So when this realisation of $\M^{\star\star}$ is compressed to the specific representation engendered by $\omega$, it follows that $e$ yields a projection $\widetilde e$ in $\pi_\omega(\M)''$ to which $\pi_\omega(a^n)$ converges in the weak*-topology on $\pi_\omega(\M)''$. This weak* convergence in $\pi_\omega(\M)''$ together with the second bullet above, then yield the facts that
\begin{itemize}
\item $\pi_\omega(a^n)\Omega_\omega$ converges to $\widetilde e\,\Omega_\omega$ in the weak-topology on $L^2(\omega)$;
\item $\langle \widetilde e\,\Omega_\omega,\, e_0^\perp\Omega_\omega\rangle = \omega_s(\I)$.
\end{itemize}
From the first bullet and the fact that
$\{\Phi(a^n)\}$ is a norm-null sequence, it follows that $\pi_\omega(a^n - \Phi(a^n))\Omega_\omega$ is weakly convergent to $\tilde e\,\Omega_\omega$, and hence that $\widetilde e\,\Omega_\omega \in \overline{\pi_\omega(\A_0)\Omega_\omega}$. But if $a^n$ converges to $e$ in the weak*-topology on $\M^{\star\star}$, then surely so does $(a^*)^n$. In terms of the GNS representation for $\omega$, this means that $\pi_\omega((a^*)^n)\Omega_\omega$ also converges to $\widetilde e\,\Omega_\omega$ in the weak-topology on $L^2(\omega)$. But then $\widetilde e\,\Omega_\omega \in \overline{\pi_\omega(\A^*)\Omega_\omega}$. Then $\widetilde e\,\Omega_\omega = 0$ since $\widetilde e \,\Omega_\omega\in \overline{\pi_\omega(\A_0)\Omega_\omega} \cap \overline{\pi_\omega(\A^*)\Omega_\omega}.$ But this cannot be, since by the second bullet this would mean that $\omega_s(\I) = \langle \widetilde e\,\Omega_\omega, \, e_0^\perp\Omega_\omega\rangle = 0$. Thus our supposition that $\omega_s$ is nonz
 ero, must be false. The condition that $\rho < 1$, is therefore sufficient to force $\omega$ to be normal. That is $\omega$ is of the form $\omega = \tau(g \cdot)$ for some $g \in L^1_+(\M)$.
\end{proof}

The following lemmata  present two known elementary facts. 

\begin{lemma}\label{Phisupp}
 For any $g \in L^1_+(\M)$ we have that 
 $$s(\Phi(g)) \geq s(g),$$
where $s(g)$ denotes the support projection of $g$.
 \end{lemma}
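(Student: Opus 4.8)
The plan is to prove the equivalent statement about the complementary (null) projections. Writing $q = \I - s(\Phi(g))$ for the null projection of $\Phi(g)$, it suffices to show that $q \le \I - s(g)$, and this in turn will follow once I establish that $q$ annihilates $g$, i.e.\ that $gq = 0$; then $q \le \I - s(g)$, whence $s(\Phi(g)) = \I - q \ge s(g)$ as desired.

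First I would record the basic structural facts. Since $\Phi$ is positive and maps $L^1(\M)$ onto $L^1(\D)$, the element $\Phi(g)$ lies in $L^1_+(\D)$; consequently its null projection $q$ belongs to $\D$ and satisfies $q\,\Phi(g) = \Phi(g)\,q = 0$. The key point is that $q$ is a projection sitting \emph{inside} the diagonal, so that it interacts well with the module structure of $\Phi$.

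The heart of the argument is to pair $g$ against $q$ through the trace. Because $q \in \D$, the $\D$-bimodularity of the conditional expectation gives $\Phi(qgq) = q\,\Phi(g)\,q = 0$, and the trace-preservation $\tau\circ\Phi = \tau$ then yields $\tau(qgq) = \tau(\Phi(qgq)) = 0$. Now $g \ge 0$, so I may factor $qgq = (g^{1/2}q)^*(g^{1/2}q) \ge 0$, where $g^{1/2}\in L^2(\M)$ and $q$ is bounded, so all products are legitimate in the noncommutative $L^1$/$L^2$ calculus. Faithfulness of $\tau$ then forces $qgq = 0$, hence $g^{1/2}q = 0$, and therefore $gq = 0$, which is exactly what was needed.

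I expect no genuine obstacle here: once the correct object $q$ is identified, the proof is essentially a two-line trace computation resting on bimodularity, trace-invariance, and faithfulness of $\tau$. The only place requiring a little care is the manipulation of the (possibly unbounded) $L^1$ element $g$, so that the identity $\Phi(qgq) = q\,\Phi(g)\,q$ and the scalar $\tau(qgq)$ are handled rigorously; if a fully careful justification is wanted, one can first verify the inequality for the bounded truncations $g_n = g \wedge n\I$ and then pass to the limit using normality of $\Phi$ and $\tau$.
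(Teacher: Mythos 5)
Your proposal is correct and follows essentially the same route as the paper: both arguments compute $\tau(q g q)=\tau(q\,\Phi(g)\,q)=0$ for $q=s(\Phi(g))^{\perp}\in\D$ using bimodularity and trace-preservation of $\Phi$, deduce $g^{1/2}q=0$ by positivity and faithfulness of $\tau$, and conclude the support inequality. The only difference is cosmetic (you phrase it via null projections and add a remark on truncations), so there is nothing to add.
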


\begin{proof} For simplicity of notation we respectively write $s$ and $s_\Phi$ for $s(g)$ and $s(\Phi(g))$. Since
$s_\Phi \in \D$, we have that 
 $$\tau(s_\Phi^\perp gs_\Phi^\perp) = \tau\circ\Phi(s_\Phi^\perp gs_\Phi^\perp) =
 \tau(s_\Phi^\perp\Phi(g)s_\Phi^\perp) = 0.$$
Therefore $g^{1/2}s_\Phi^\perp = s_\Phi^\perp g^{1/2} = 0$. This is sufficient
to force $s_\Phi^\perp \perp s$, which in turn suffices to show that $s_\Phi \geq s$.
\end{proof}

\begin{lemma}
Let $e$ be a nonzero projection in $\D$. Then $e\A e$ is a finite maximal subdiagonal subalgebra of $e\M e$ $($equipped with the trace $\tau_e(\cdot) = \frac{1}{\tau(e)}\tau(\cdot))$ with diagonal $e\A e \cap (e\A e)^* = e\D e$.
\end{lemma}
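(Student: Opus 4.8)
The plan is to verify the two defining axioms of a finite subdiagonal algebra for $e\A e$ inside the corner $e\M e$, and then to obtain maximality for free. Everything hinges on the elementary identities $e\A e = \A \cap e\M e$, $e\A^* e = \A^* \cap e\M e$ and $e\D e = \D \cap e\M e$. Each is proved the same way: since $e \in \D \subseteq \A$, the product $eae$ lies in $\A$ for every $a \in \A$, giving $e\A e \subseteq \A \cap e\M e$, while conversely any $x \in \A \cap e\M e$ satisfies $x = exe \in e\A e$; replacing $\A$ by $\A^*$ or by $\D$ (both containing $e$) yields the other two. From these identities $e\A e$ is at once a weak* closed unital subalgebra of the von Neumann algebra $e\M e$: it is the intersection of the weak* closed set $\A$ with $e\M e$, it contains the unit $e = eee$, and it is closed under multiplication because $(eae)(ebe) = e(aeb)e$ with $aeb \in \A$. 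Moreover its self-adjoint part is $(e\A e) \cap (e\A e)^* = (\A \cap e\M e) \cap (\A^* \cap e\M e) = \D \cap e\M e = e\D e$, which identifies the diagonal.

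Next I would establish weak* density of $e\A e + (e\A e)^*$ in $e\M e$. The compression map $x \mapsto exe$ is weak*-continuous on $\M$ and restricts to the identity on $e\M e$, hence maps $\M$ onto $e\M e$ and carries $\A + \A^*$ onto $e\A e + (e\A e)^*$. Applying it to a net in $\A + \A^*$ converging weak* to a given $y \in e\M e$ produces a net in $e\A e + (e\A e)^*$ converging weak* to $y$, so the weak* density of $\A + \A^*$ in $\M$ descends to the corner.

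The heart of the proof is the conditional expectation. Let $\tau_e = \tau(e)^{-1}\tau$, a faithful normal tracial state on $e\M e$, and let $\Phi_e$ be the restriction of $\Phi$ to $e\M e$. Since $e \in \D$ and $\Phi$ is a $\D$-bimodule map, $\Phi_e(x) = \Phi(exe) = e\Phi(x)e \in e\D e$ for all $x \in e\M e$, $\Phi_e$ fixes $e\D e$ pointwise, and $\tau_e \circ \Phi_e = \tau_e$; by uniqueness of the trace-preserving conditional expectation in the finite algebra $e\M e$ this $\Phi_e$ is indeed the expectation onto $e\D e$. Multiplicativity on $e\A e$ is then the one genuine computation: for $a, b \in \A$, using $aeb \in \A$, $\Phi(e) = e$, and the multiplicativity of $\Phi$ on $\A$, $\Phi_e\big((eae)(ebe)\big) = e\Phi(aeb)e = e\Phi(a)e\Phi(b)e = (e\Phi(a)e)(e\Phi(b)e) = \Phi_e(eae)\,\Phi_e(ebe)$.

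This shows $e\A e$ is a finite subdiagonal algebra of $e\M e$ with diagonal $e\D e$, and maximality needs no further argument, since it is a standard fact that any finite subdiagonal algebra (i.e.\ one whose defining conditional expectation preserves a faithful normal tracial state) is automatically maximal. The only step requiring real care is the multiplicativity identity, where the hypothesis $e \in \D$ is used twice (to keep $aeb$ in $\A$ and to guarantee $\Phi(e) = e$) in conjunction with the multiplicativity of $\Phi$ on $\A$; the remaining assertions are formal consequences of the three intersection identities.
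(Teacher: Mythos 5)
Your proof is correct and follows essentially the same route as the paper's: verify multiplicativity of $\Phi$ on the compression, check that $\Phi$ carries $e\A e$ onto $e\D e$, deduce weak* density of $e\A e + (e\A e)^*$ in $e\M e$ from that of $\A + \A^*$ in $\M$, and appeal to the standard automatic-maximality of finite subdiagonal algebras. The paper leaves all of these as one-line assertions, whereas you supply the details (the intersection identities, the $\D$-bimodule argument, and the explicit multiplicativity computation), all of which check out.
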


\begin{proof}
The expectation $\Phi$ is trivially still multiplicative on the compression $eAe$. Using the fact that $e \in \D$, it is an exercise to see that $\Phi$ maps $e\A e$ onto $e\D e$. It is also straightforward to see that the weak*-density of $\A+\A^*$ in $\M$ forces the weak*-density of $e\A e +(e\A e)^*$ in $e\M e$, and that $(e\A e)_0=e\A_0 e$.
\end{proof}

\begin{definition}
Adopting the notation of the previous two lemmata, given a nonzero element $g \in L^1_+(\M)$, we define
$\Delta_\Phi(g)$ to be the determinant of $s_\Phi g s_\Phi$ regarded as an element of $(s_\Phi \M s_\Phi, \tau_{s_\Phi})$
\end{definition}

\begin{proposition}\label{prop2}
Let $\D = \A \cap \A^*$ be finite dimensional, and let $g \in L^1_+(\M)$ be a norm-one element for which the state $\omega = \tau(g\cdot)$ satisfies $\rho < 1$. Then $\Delta_\Phi(g) > 0$.
\end{proposition}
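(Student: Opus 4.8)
The plan is to first pass to the corner in which $\Phi(g)$ becomes invertible, and then, assuming $\Delta_\Phi(g)=0$, to manufacture an asymptotically degenerate test pair for the angle, contradicting $\rho<1$.

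First I would compress to $s_\Phi\M s_\Phi$, where $s_\Phi=s(\Phi(g))$. Since $\Phi(g)\in\D$ we have $s_\Phi\in\D$, so by the compression lemma above, $s_\Phi\A s_\Phi$ is a finite subdiagonal algebra of $s_\Phi\M s_\Phi$ with diagonal $s_\Phi\D s_\Phi$ and $(s_\Phi\A s_\Phi)_0=s_\Phi\A_0 s_\Phi$. By Lemma~\ref{Phisupp}, $s(g)\le s_\Phi$, whence $s_\Phi g s_\Phi=g$ and $\Phi(g)$ has full support $s_\Phi$ in the corner; thus $\Phi(g)$ is invertible there and $\Delta_\Phi(g)$ is precisely the determinant of $g$ computed in $(s_\Phi\M s_\Phi,\tau_{s_\Phi})$. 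Because $s_\Phi\A_0 s_\Phi\subseteq\A_0$ and $s_\Phi\D s_\Phi\subseteq\A^*$, the angle functional for the compressed problem is a supremum over a subfamily of the original one and so remains $<1$. Hence it suffices to prove: if $\Phi(g)$ is invertible and $\rho<1$, then $\Delta(g)>0$, noting that $\D$ stays finite dimensional throughout.

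Next I would argue by contradiction, assuming $\Delta(g)=0$. By the noncommutative Szeg\"o theorem (\cite{ar} and its $L^p$-extensions \cite{BL3,bx}), $\Delta(g)=\inf\{\tau(g|d+a|^2):d\in\D,\ a\in\A_0,\ \Delta(d)\ge1\}$, so there exist $d_n\in\D$ and $a_n\in\A_0$ with $\Delta(d_n)\ge1$ and $\|d_n+a_n\|_\omega^2=\tau\bigl(g|d_n+a_n|^2\bigr)\to0$, where $\|x\|_\omega^2=\omega(x^*x)=\tau(g\,x^*x)$. In the cyclic representation this reads $\pi_\omega(d_n)\Omega_\omega+\pi_\omega(a_n)\Omega_\omega\to0$ in $L^2(\omega)$, with $d_n\in\D\subseteq\A^*$ and $a_n\in\A_0$. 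The remaining task is to prevent both of these vectors from vanishing in the limit, which would render the pair useless. The crux is a uniform lower bound on $\|d_n\|_\omega$. Since $|d_n|^2\in\D$ and $\Phi$ is trace preserving and $\D$-bimodular, $\|d_n\|_\omega^2=\tau(g|d_n|^2)=\tau\bigl(\Phi(g)|d_n|^2\bigr)$; here both hypotheses enter decisively, since $\D$ is finite dimensional and $\Phi(g)$ is (after the reduction) a positive invertible element of $\D$, forcing $\Phi(g)\ge c\,\I$ for some $c>0$. Combined with $\Delta(d_n)\ge1\Rightarrow\|d_n\|_2\ge\Delta(d_n)\ge1$, this gives $\|d_n\|_\omega^2\ge c\,\|d_n\|_2^2\ge c>0$.

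With this bound in hand I would rescale, putting $\hat d_n=d_n/\|d_n\|_\omega\in\D\subseteq\A^*$ and $\hat a_n=a_n/\|d_n\|_\omega\in\A_0$; then $\|\hat d_n\|_\omega=1$, while $\|\hat d_n+\hat a_n\|_\omega\le\|d_n+a_n\|_\omega/\sqrt c\to0$ forces $\|\hat a_n\|_\omega\to1$ and $\langle\hat a_n,\hat d_n\rangle_\omega=\langle\hat a_n+\hat d_n,\hat d_n\rangle_\omega-\|\hat d_n\|_\omega^2\to-1$. Thus the normalised inner product along the admissible pairs $(\hat a_n,\hat d_n)$ tends to $1$, giving $\rho=1$ and contradicting $\rho<1$; therefore $\Delta(g)>0$, i.e. $\Delta_\Phi(g)>0$. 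I expect the reduction to the corner and the Szeg\"o input to be routine, and the genuine obstacle to be the bound $\|d_n\|_\omega\ge\sqrt c$: this is exactly the point at which the finite dimensionality of $\D$ is indispensable, as it is what upgrades invertibility of $\Phi(g)$ to boundedness below.
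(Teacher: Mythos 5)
Your proof is correct, and it follows the paper's overall strategy (reduce to the corner where $\Phi(g)$ has full support, then contradict $\rho<1$ via the Szeg\"o formula), but the finishing step is genuinely different. The paper normalises $d_n$ in the uniform norm, invokes Lemma 2.2 of \cite{BL2} together with a polar-decomposition trick to reduce to $d_n\in\D^+$, and then uses compactness of the unit ball of the finite-dimensional $\D$ to extract a uniformly convergent subsequence $\widetilde{d_n}\to d_0$; the contradiction is that $\pi_g(d_0)$ is a nonzero vector in $\overline{\pi_g(\A_0)}\cap\overline{\pi_g(\A^*)}$, nonzero because $\tau(gd_0)=\tau(\Phi(g)d_0)>0$ by full support. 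You instead normalise in $\|\cdot\|_\omega$ and prove the quantitative lower bound $\|d_n\|_\omega^2=\tau(\Phi(g)|d_n|^2)\ge c\,\|d_n\|_2^2\ge c\,\Delta(d_n)^2\ge c$, exploiting finite-dimensionality through the implication that a positive element of full support in a finite-dimensional algebra is bounded below, rather than through compactness; this lets you show directly that the normalised correlations $|\langle\hat a_n,\hat d_n\rangle_\omega|/(\|\hat a_n\|_\omega\|\hat d_n\|_\omega)$ tend to $1$, with no subsequence extraction and no reduction to positive $d_n$. Both routes use the finite-dimensionality of $\D$ essentially (neither survives without it), but yours is more quantitative and somewhat leaner. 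Two cosmetic points: for the compressed angle to be dominated by the original one you want $s_\Phi\A^*s_\Phi\subseteq\A^*$ rather than merely $s_\Phi\D s_\Phi\subseteq\A^*$, though your final test vectors lie in $\D$ anyway; and since $\|\hat a_n\|_\omega$ may exceed $1$ along the sequence, one should replace $\hat a_n$ by $\hat a_n/\max(1,\|\hat a_n\|_\omega)$ before inserting it into the supremum defining $\rho$ --- both fixes are trivial.
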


\begin{proof}
It is clear from the previous lemmata that we may reduce matters to the case where $s(\Phi(g)) = \I$, and hence we will assume this to be the case. Suppose by way of contradiction that $\Delta(g) = 0$. By the Szeg\"o formula for subdiagonal algebras \cite{L1}, we then have that
 $$0 = \Delta(g) = \inf \{ \tau(g |a-d|^2): a \in \A_0 , d \in \D , \Delta(d) \geq 1 \}.$$
Thus there exist sequences $\{a_n\} \subset \A_0$ and $\{d_n\} \subset \D$ with $\Delta(d_n) \geq 1$ for all $n$, so that 
 $$\tau(g|a_n-d_n|^2) \to 0 \quad \mbox{as} \quad n \to \infty.$$
By Lemma 2.2 of \cite{BL2} we may assume all the $d_n$'s to be invertible. Now let $u_n \in \D$ be the unitary in the polar decomposition $d_n = u_n|d_n|$. It is an exercise to see that then $\{u_n^*a_n\} \subset \A_0$ with $|a_n-d_n|^2 = \big|u_n^*a_n-|d_n|\big|^2$. Making the required replacements, we may therefore also assume that $\{d_n\} \subset \D^+$.

Since $1 \leq \Delta(d_n) \leq \|d_n\|_\infty$ for all $n$, we will for the sequences $\widetilde{d_n} = \frac{1}{\|d_n\|_\infty}d_n$ and $\widetilde{a_n} = \frac{1}{\|d_n\|_\infty}a_n$ ($n \in \mathbb{N}$), still have that $\tau(g|\widetilde{a_n}-\widetilde{d_n}|^2) \to 0$ as $n \to \infty$. Now recall that $\D$ is finite dimensional. So by passing to a subsequence if necessary, we may assume that $\{\widetilde{d_n}\}$ converges uniformly to some norm one element $d_0$ of $\D^+$. But then by what we showed above,
 \begin{eqnarray*}
 \|\pi_g(\widetilde{a_n}) - \pi({d_0})\|_2 &=& \tau(g|\widetilde{a_n}-{d_0}|^2)^{1/2}\\
 &\leq& \tau(g|\widetilde{a_n}-\widetilde{d_n}|^2)^{1/2} + \tau(g|\widetilde{d_n}-{d_0}|^2)^{1/2}\\
 &\leq& \tau(g|\widetilde{a_n}-\widetilde{d_n}|^2)^{1/2} + \|\widetilde{d_n}-{d_0}\|_\infty\tau(g)^{1/2}\\
 &\to& 0.
 \end{eqnarray*}
Thus $\pi_g(d_0) \in \overline{\pi_g(\A_0)} \cap \overline{\pi_g(\A^*)}$. Since $\Phi(g)$ is of full support, we have that
$\Phi(g)^{1/2}d_0\Phi(g)^{1/2} \neq 0$. So 
 $$0 < \tau(\Phi(g)^{1/2}d_0\Phi(g)^{1/2}) = \tau(\Phi(g)d_0) = \tau(\Phi(gd_0)) =\tau(gd_0).$$
Therefore $\pi_g(d_0) \neq 0$. But this proves that the subspaces $\overline{\pi_g(\A_0)}$ and $\overline{\pi_g(\A^*)}$
have a nonzero intersection, and hence that $\rho = 1$.
\end{proof}

\begin{remark}
 Under the assumption of the previous proposition, the support $s(\Phi(g))$ can be 
strictly less than $\I$. Indeed, consider the $\mathbb M_2$-valued case: $\M=L^\infty(\T;\mathbb M_2)$ and 
$\A=H^\infty(\T;\mathbb M_2)$. Let $w$ be a weight satisfying \eqref{hs-cond} and $g=w\otimes e_{11}$, where $e_{11}$ the matrix whose only nonzero entry is the one at the position $(1, 1)$ which is equal to $1$. Then the corresponding $\rho$ is less than $1$ but $s(\Phi(g))=e_{11}$.
\end{remark}

The following technical lemma is a crucial step in the proof of the classical Helson-Szeg\"o theorem. The challenge one faces in the noncommutative world is that the functional calculus at our disposal in that context is simply not strong enough to reproduce so detailed a statement in that framework. However in the lemma following this one, we present what we believe to be a reasonable noncommutative substitute of this interesting lemma.

\begin{lemma}
Let $u = e^{-i\psi}$ with $\psi$  a real measurable function on $\mathbb{T}$. Then $\inf_{g\in H^\infty(\T)}\|e^{-i\psi}-g\|_\infty < 1$ if and only if there exist an $\epsilon> 0$ and a $k_0\in H^\infty(\T)$ so that $|k_0|\geq\epsilon$ and $|\psi + \arg(k_0)| \leq \frac{\pi}{2}-\epsilon$ almost everywhere .
\end{lemma}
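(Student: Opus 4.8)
The plan is to reduce everything to elementary plane geometry by multiplying through by the unimodular factor $e^{i\psi}$. Since $|e^{i\psi}|=1$ a.e., one has the pointwise identity $|e^{-i\psi}-g|=|1-e^{i\psi}g|$ for every $g$. Consequently $\inf_{g\in H^\infty(\T)}\|e^{-i\psi}-g\|_\infty<1$ holds precisely when some $g\in H^\infty(\T)$ makes $e^{i\psi}g$ lie a.e.\ in a closed disc $\{z:|z-1|\le\delta\}$ with $\delta<1$, while the condition on $k_0$ says exactly that $e^{i\psi}k_0$ lies a.e.\ in the sector $\{z:|\arg z|\le\frac\pi2-\epsilon\}$ and stays bounded away from $0$. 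Both sides of the asserted equivalence are therefore statements about forcing $e^{i\psi}$ times an $H^\infty$ function into a prescribed region of the right half-plane, and the proof splits into reading off such region-membership in each direction.

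For the forward implication I would fix $g\in H^\infty(\T)$ and $\delta<1$ with $\|e^{-i\psi}-g\|_\infty\le\delta$, so that $e^{i\psi}g$ lies a.e.\ in the disc $D(1,\delta)$. Since this disc avoids the origin, two facts can be read off directly: every point has modulus at least $1-\delta$, giving $|g|\ge 1-\delta>0$; and, taking the continuous branch of the argument on $D(1,\delta)$, every point has argument in $[-\arcsin\delta,\arcsin\delta]\subset(-\tfrac\pi2,\tfrac\pi2)$. Setting $k_0=g$ and choosing the measurable branch $\arg(k_0):=\arg(e^{i\psi}g)-\psi$ (which is a genuine argument of $k_0$ because $e^{i(\arg(e^{i\psi}g)-\psi)}=k_0/|k_0|$), I obtain $\psi+\arg(k_0)=\arg(e^{i\psi}g)\in[-\arcsin\delta,\arcsin\delta]$. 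Taking $\epsilon=\min\{1-\delta,\ \tfrac\pi2-\arcsin\delta\}>0$ then delivers both required inequalities.

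For the converse I would start from the sector condition, which gives $F:=e^{i\psi}k_0=|k_0|\,e^{i(\psi+\arg k_0)}$ with $\operatorname{Re}F=|k_0|\cos(\psi+\arg k_0)\ge \epsilon\sin\epsilon>0$ and $|F|=|k_0|\le M:=\|k_0\|_\infty$ a.e. I then test with $g=t\,k_0\in H^\infty(\T)$ for a small real $t>0$, using the pointwise identity
$$|1-tF|^2 = 1-2t\operatorname{Re}F+t^2|F|^2 \le 1-2\epsilon\sin\epsilon\,t + M^2 t^2.$$
Choosing $t=\epsilon\sin\epsilon/M^2$ makes the right-hand side equal to $1-(\epsilon\sin\epsilon)^2/M^2<1$, whence $\|e^{-i\psi}-g\|_\infty=\|1-tF\|_\infty<1$, as required.

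The analytic content above is entirely routine; the one point that requires care—and the only place I expect to have to be precise—is the bookkeeping of the branch of $\arg(k_0)$ in the forward direction. The inequality $|\psi+\arg(k_0)|\le\tfrac\pi2-\epsilon$ is a statement about actual real values rather than about residues mod $2\pi$, so one must check that $\arg(e^{i\psi}g)-\psi$ is measurable and genuinely represents an argument of $k_0$. Once this matching is made explicit, everything else follows from the triangle inequality and the single real-parameter estimate above.
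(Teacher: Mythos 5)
Your proof is correct. Note, however, that the paper itself offers no proof of this lemma: it is quoted as the known classical ingredient of the Helson--Szeg\H{o} theorem (cf.\ Garnett), and the paper instead proves a noncommutative substitute in the lemma that immediately follows, namely that for a unitary $u$ one has $\inf_{f\in\A}\|u-f\|_\infty<1$ if and only if $\Re(u^*h)$ is strictly positive for some $h\in\A$. Your argument is in effect the scalar specialisation of that proof: your forward direction reads off from $\|\I-e^{i\psi}g\|_\infty\le\delta<1$ that $e^{i\psi}g$ lives in $D(1,\delta)$, hence has real part bounded below (the paper's $\Re(u^*f)\ge(1-\alpha)\I$ step), and your converse is exactly the paper's computation $|\I-u^*(\lambda h)|^2=\I-2\lambda\Re(u^*h)+\lambda^2|h|^2\le(1-\delta)\I$ with $\lambda$ small, carried out with $F=e^{i\psi}k_0$ and $t=\epsilon\sin\epsilon/M^2$. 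What you add, and what the noncommutative version deliberately avoids because the functional calculus cannot reproduce it, is the translation between the half-plane condition $\Re(e^{i\psi}k_0)\ge c$ and the sector formulation $|k_0|\ge\epsilon$, $|\psi+\arg(k_0)|\le\frac{\pi}{2}-\epsilon$; your handling of the measurable branch $\arg(k_0):=\arg(e^{i\psi}g)-\psi$ is exactly the point that needs care, and you treat it correctly (the principal argument is continuous on $D(1,\delta)$, so the branch is measurable and satisfies $e^{i\arg(k_0)}=k_0/|k_0|$). So your write-up supplies a complete elementary proof of a statement the paper leaves as a citation, and it makes transparent why the next lemma is the right noncommutative surrogate.
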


\begin{lemma}
Let $u$ be a unitary element of $\M$. Then there exists some $f\in \A$ so that $\|u-f\|_\infty<1$ if and only if there exists $h\in \A$ so that $\Re(u^*h)$ is strictly positive.
\end{lemma}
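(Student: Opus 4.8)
The plan is to establish a noncommutative analogue of the classical fact relating distance-to-$H^\infty$ to a positive-angle condition. I would prove both implications separately, treating the statement as an equivalence between the metric condition $\inf_{f\in\A}\|u-f\|_\infty<1$ and the existence of $h\in\A$ with $\Re(u^*h)$ strictly positive.

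For the easier direction, suppose there exists $f\in\A$ with $\|u-f\|_\infty<1$. Set $\delta=\|u-f\|_\infty<1$, so that $\|u^*f-\I\|_\infty=\|u^*(f-u)\|_\infty=\|u-f\|_\infty=\delta$ since $u$ is unitary. Taking $h=f$, I would compute $\Re(u^*h)=\frac{1}{2}(u^*f+f^*u)$ and observe that $\|u^*h-\I\|_\infty<1$ forces the selfadjoint part $\Re(u^*h)$ to be bounded below: indeed $\Re(u^*h-\I)=\Re(u^*h)-\I$ satisfies $\|\Re(u^*h)-\I\|_\infty\le\|u^*h-\I\|_\infty=\delta<1$, whence $\Re(u^*h)\ge(1-\delta)\I>0$ is strictly positive. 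This direction is essentially a direct computation using unitarity of $u$.

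For the converse, suppose $h\in\A$ satisfies $\Re(u^*h)\ge\epsilon\I$ for some $\epsilon>0$. The idea is to scale $h$ appropriately so that a suitable multiple $f=\lambda h$ lies within distance strictly less than $1$ of $u$. Writing $k=u^*h$, the hypothesis is $\Re(k)\ge\epsilon\I$, and I want to find $\lambda>0$ with $\|u-\lambda h\|_\infty=\|\I-\lambda k\|_\infty<1$, which would give $f=\lambda h\in\A$ as required. The natural computation is $\|\I-\lambda k\|_\infty^2=\|(\I-\lambda k)^*(\I-\lambda k)\|_\infty=\|\I-2\lambda\Re(k)+\lambda^2 k^*k\|_\infty\le 1-2\lambda\epsilon+\lambda^2\|k\|_\infty^2$, using $\Re(k)\ge\epsilon\I$ in the negative term and the norm bound in the positive term. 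For $\lambda>0$ small this upper bound is strictly less than $1$, so choosing $\lambda=\epsilon/\|k\|_\infty^2$ (or any sufficiently small positive value) yields $\|\I-\lambda k\|_\infty<1$.

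The main obstacle I anticipate is ensuring the operator-norm inequality $\I-2\lambda\Re(k)+\lambda^2 k^*k\le(1-2\lambda\epsilon+\lambda^2\|k\|_\infty^2)\I$ is handled correctly in the noncommutative setting, since $\Re(k)$ and $k^*k$ need not commute. The bound follows because $-2\lambda\Re(k)\le -2\lambda\epsilon\I$ and $\lambda^2 k^*k\le\lambda^2\|k\|_\infty^2\I$ as selfadjoint operators, and these order relations add. One should also verify that $h$ is genuinely nonzero (which is automatic from $\Re(u^*h)>0$) and that $k=u^*h$ is a bounded element of $\M$ so that $\|k\|_\infty<\infty$; both are immediate since $u\in\M$ is unitary and $h\in\A\subseteq\M$. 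Once these order-theoretic points are confirmed, the equivalence follows cleanly, and this lemma serves as the announced noncommutative substitute for the preceding classical functional-calculus lemma.
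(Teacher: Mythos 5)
Your proposal is correct and follows essentially the same route as the paper's own proof: the forward direction via $\|\Re(u^*f)-\I\|\le\|u^*f-\I\|<1$, and the converse by scaling $h$ to a small multiple $\lambda h$ and expanding $|\I-\lambda u^*h|^2=\I-2\lambda\Re(u^*h)+\lambda^2|h|^2\le(1-2\lambda\epsilon+\lambda^2\|h\|_\infty^2)\I$. The only cosmetic difference is that the paper writes the bound in terms of $|h|^2$ and an auxiliary parameter rather than $k^*k=|h|^2$ with an explicit optimal $\lambda$, and it records (harmlessly, since it is automatic from $\Re(u^*h)\ge\alpha\I$) that $h$ is invertible; both arguments interpret ``strictly positive'' as bounded below by $\epsilon\I$, consistently with how the lemma is used later.
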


\begin{proof}
Suppose first that there exists $f\in \A$ with $\|u-f\|_\infty<1$. We then equivalently have that $\|\I-u^*f\|=\|\I-f^*u\|<1$. On setting $\alpha = \|\I-u^*f\|$, it follows that $\|\I-\Re(u^*f)\| \leq \alpha < 1$, and hence that 
 $$-\alpha\I\leq\Re(u^*f)-\I\leq \alpha\I.$$
This in turn ensures that $0<(1-\alpha)\I \leq \Re(u^*f)$.

Conversely suppose that there exists $h\in \A\cap \M^{-1}$ so that $\Re(u^*h)\geq \alpha\I$ for some $0 < \alpha \leq \|\Re(u^*h)\| \leq\|h\|$, where $\M^{-1}$ denotes the subset of invertible elements of $\M$. Given $\epsilon> 0$, set $\lambda=\frac{\epsilon}{\|h\|}$. It then follows that
 $$-2\lambda\Re(u^*h) + \lambda^2|h|^2 \leq -\left(\tfrac{2\alpha\epsilon}{\|h\|}-\epsilon^2\right)\I.$$
(Observe that $\frac{\alpha}{\|h\|} \leq 1$ in the above inequality.) It is clear that if $\epsilon$ is small enough, we would have that $1>\left(\tfrac{2\alpha\epsilon}{\|h\|}-\epsilon^2\right)>0$. Thus we may assume this to be the case. For simplicity of notation we now set $\delta = \left(\tfrac{2\alpha\epsilon}{\|h\|}-\epsilon^2\right)$. It therefore follows from the previous centered inequality that 
 $$0 \leq |\I-u^*(\lambda h)|^2 = \I -2\Re(u^*(\lambda h)+|\lambda h|^2 \leq (1-\delta) \I.$$
Hence as required, $\|\I-u^*(\lambda h)\|^2\leq(1-\delta)< 1$.
\end{proof}

We are now finally ready to present our noncommutative Helson-Szeg\"o theorem. In view of Propositions \ref{prop1} and \ref{prop2}, it is not unreasonable to restrict attention to normal states $\tau(g\cdot)$ in this theorem for which $\Delta_\Phi(g) > 0$. The following result is a sharpening of the result of Pousson \cite[Theorem 4.3]{P2}, in that here the conditions imposed on the unitary $u$ are less restrictive. This sharpening is achieved by means of the preceding Lemma.

\begin{theorem}\label{HS1}
 Let $g \in L^1_+(\M)$ be given with $\|g\|_1 = 1$, and denote $s(\Phi(g))$ by $s_\Phi$. Consider the state $\omega = \tau(g\cdot)$. Then $\rho < 1$ and $\Delta_\Phi(g) > 0$ if and only if $g$ is of the form $g=f_Ruf_L$ where
 \begin{itemize}
 \item $u\in \M$ is a partial isometry with initial and final projections $s_\Phi$ for which there exists some $k\in s_\Phi \A s_\Phi$ so that $\Re(u^*k) \geq  \alpha s_\Phi$ for some $\alpha >0$,
 \item and $f_L$ and $f_R$ are strongly outer elements of $H^2(\M)$ commuting with $s_\Phi$ for which $g+(\I-s_\Phi) = |f_L|^2 = |f_R^*|^2$.
 \end{itemize}
If in addition $\dim\D<\infty$, we may dispense with the restrictions that $\omega$ is normal, and that $\Delta_\Phi(g) > 0$.
 \end{theorem}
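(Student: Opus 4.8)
The plan is to reduce the whole equivalence to the single identity $\rho=\mathrm{dist}_{\M}(u,\A)$ relating the angle to the distance of a unitary symbol from the analytic algebra, and then to read off the stated condition on $u$ from the preceding Lemma. First I would remove the support issue by compressing to the corner $s_\Phi\M s_\Phi$. By Lemma~\ref{Phisupp} we have $s(g)\le s_\Phi$, and by the compression lemma $s_\Phi\A s_\Phi$ is again a finite subdiagonal algebra, with $\Phi$ now sending $g$ to a \emph{full-support} element and with $\Delta_\Phi(g)$ equal by definition to the determinant computed there. Since the data defining $\rho$ (the vectors $\pi_\omega(\A_0)\Omega_\omega$, $\pi_\omega(\A^*)\Omega_\omega$ and the weight $g$) are supported on this corner, it suffices to argue under the standing assumption $s_\Phi=\I$, the full factorisation being reconstituted by adjoining $\I-s_\Phi$; this is exactly the role of the summand $g+(\I-s_\Phi)$, of the requirement that $f_L,f_R$ commute with $s_\Phi$, and of $u$ being a partial isometry with initial and final projection $s_\Phi$. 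Note that once $s_\Phi=\I$, the hypothesis $\Delta_\Phi(g)>0$ makes $g$ invertible as an affiliated operator.

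For the forward implication I would apply the Riesz--Szeg\"o factorisation quoted in the excerpt to $g^{1/2}\in L^2(\M)$, once in $\M$ and once in the opposite algebra $\M^{\mathrm{op}}$, to obtain strongly outer elements $f_L,f_R\in H^2(\M)$ with $f_L^*f_L=g=f_Rf_R^*$. Setting $u:=f_R^*f_L^{-1}$, a direct computation gives $u^*u=(f_L^*)^{-1}gf_L^{-1}=\I$ and $uu^*=f_R^*g^{-1}f_R=\I$, so that $u$ is a unitary of $\M$ and $g=f_Ruf_L$. I would then realise the GNS space $L^2(\omega)$ inside $L^2(\M)$ via the isometry $a\mapsto af_R$, which is an isometry precisely because $g=f_Rf_R^*$ and which intertwines left multiplication; under it $\A_0$ maps into $H^2_0(\M)$ and, writing $f_R=f_L^*u^*$, the space $\A^*$ maps into $H^2(\M)^*u^*$. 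Using that $f_R$ is right outer and $f_L$ left outer, these images are dense, so $\rho$ is the cosine of the angle between $H^2_0(\M)$ and $H^2(\M)^*u^*$. Multiplying on the right by the unitary $u$, invoking the factorisation $H^1_0(\M)=H^2(\M)\cdot H^2_0(\M)$, and using that the preannihilator of $\A$ in $L^1(\M)$ is $H^1_0(\M)$, this cosine equals $\sup\{|\tau(\zeta u)|:\zeta\in H^1_0(\M),\ \|\zeta\|_1\le1\}=\mathrm{dist}_{\M}(u,\A)$. Hence $\rho<1$ iff $\mathrm{dist}_{\M}(u,\A)<1$, which by the preceding Lemma is equivalent to the existence of $k\in\A$ with $\Re(u^*k)\ge\alpha\I$.

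The converse reverses these steps. Given the factorisation, multiplicativity of the Fuglede--Kadison determinant gives $\Delta(g)=\Delta(f_R)\Delta(u)\Delta(f_L)=\Delta(f_R)\Delta(f_L)>0$ (as $\Delta(u)=1$ for a unitary), so $\Delta_\Phi(g)>0$; and the same identifications yield $\rho=\mathrm{dist}_{\M}(u,\A)$, which is strictly less than $1$ by the Lemma applied to the hypothesis $\Re(u^*k)\ge\alpha s_\Phi$. Finally, to dispense with normality of $\omega$ and with $\Delta_\Phi(g)>0$ when $\dim\D<\infty$, I would quote Propositions~\ref{prop1} and~\ref{prop2}: the former turns $\rho<1$ into normality of $\omega$, i.e.\ $\omega=\tau(g\cdot)$, and the latter then forces $\Delta_\Phi(g)>0$, after which the main equivalence applies verbatim.

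I expect the main obstacle to lie in the two Hardy-space facts underpinning $\rho=\mathrm{dist}_{\M}(u,\A)$: that $\overline{\A_0f_R}=H^2_0(\M)$ and $\overline{f_L\A}=H^2(\M)$ for strongly outer $f_L,f_R$, and that every element of $H^1_0(\M)$ factors as a product of an $H^2(\M)$-element and an $H^2_0(\M)$-element with control of the $L^1$-norm. Both draw on the theory of outer elements and the noncommutative Riesz factorisation for these spaces. The remaining delicate points are the verification that compression to $s_\Phi\M s_\Phi$ leaves $\rho$ unchanged, and the bookkeeping converting the corner unitary into a partial isometry with initial and final projection $s_\Phi$ in the general-support case.
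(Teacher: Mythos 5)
Your proposal is correct and follows essentially the same route as the paper's proof: Riesz--Szeg\"o factorisation of $g^{1/2}$ in the corner $s_\Phi\M s_\Phi$ to produce $g=f_Ruf_L$ (your $u=f_R^*f_L^{-1}$ coincides with the paper's $u=v_Rv_L$), the identification $\rho=\mathrm{dist}(u,s_\Phi\A s_\Phi)$ via outerness, the noncommutative Riesz factorisation of $H^1_0$, and the duality $(H^1_0)^\circ=\A$, followed by the preceding Lemma; the appeal to Propositions~\ref{prop1} and~\ref{prop2} for the finite-dimensional-diagonal addendum is also as intended. The only cosmetic difference is that the paper defines $u$ directly from the two unitaries in the factorisations $g^{1/2}=v_Lh_L=h_Rv_R$, which sidesteps the (harmless but unbounded-operator) manipulations needed to verify that $f_R^*f_L^{-1}$ is a unitary.
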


\begin{proof}
 Set $s=s_\Phi$ for simplicity. Suppose that $g$ satisfies the condition $\Delta_\Phi(g) > 0$. Using the fact that then $\Delta_\Phi(g^{1/2}) = \Delta_\Phi(g)^{1/2} > 0$, it follows from the noncommutative Riesz-Szeg\"{o} theorem (see \cite{BL3}) that there exist strongly outer elements $h_L, h_R \in H^2(s \M s)$ and unitaries $v_L, v_R \in s\M s$ for which $g^{1/2} = v_Lh_L = h_Rv_R$. (Then also $g^{1/2} = |h_L| = |h_R^*|$.)
We set 
 $$u = v_Rv_L, \quad f_L = h_L+s^\perp,\quad f_R=h_R+s^\perp.$$
It is then clear that 
 $$g = f_Ruf_L\quad \textrm{and}\quad  g +s^\perp = |f_L|^2 = |f_R^*|^2.$$ 
We proceed to show that $f_L$ and $f_R$ are strongly outer. The proofs of the two cases are identical, and hence we do this for $f_L$ only. Notice that   
 $$\log(|f_L|) = \log(|h_L| + s^\perp) = \log(|h_L|)s.$$ 
Since $\Phi(f_L) = \Phi(h_L)+s^\perp$, we similarly have that 
 $$\log(|\Phi(f_L)|) = \log(|\Phi(h_L)|)s.$$
It then follows that
 $$ \tau(\log|f_L|) =\tau(s) \tau_{s}(\log|h_L|) \quad \textrm{and}\quad 
  \tau(\log|\Phi(f_L)|) =\tau(s) \tau_{s}(\log|\Phi(h_L)|).$$
Thus the outerness of $h_L$ yields that
 $$\tau(\log|f_L|) = \tau(\log|\Phi(f_L)|)>-\infty,\;\textrm{ so }\; \Delta(f_L)=\Delta(\Phi(f_L))>0.$$
Then an application of \cite[Theorem 4.4]{BL3} now shows that $f_L$ is strongly outer.

On the other hand, we have 
 $$\langle \pi_g(a)\Omega_g,\, \pi(b)\Omega_g \rangle = \tau(gb^*a) = \tau(uf_Lb^*af_R),\quad a \in \A_0, b\in \A^*.$$ 
So
 \begin{eqnarray*}
 \rho &=& \sup\{|\tau(gb^*a)| : a \in \A_0, b\in \A^*, \tau(g|a|^2) \leq 1, \tau(g|b|^2) \leq 1\}\\
 &=& \sup\{|\tau\big((u (s f_Lb^*)(af_Rs )\big)| : a \in \A_0, b\in \A^*, 
 \tau(|af_Rs|^2) \leq 1, \tau(|bf_L^*s|^2) \leq 1\}\\
 &=& \sup\{|\tau(uF_1F_2)| : F_1 \in s H^2(\M), F_2 \in H_0^2(\M)s, \|F_1\|_2 \leq 1,  \|F_2\|_2 \leq 1\}.
 \end{eqnarray*}
In the above computation one has used the fact that $f_L$ and $f_R$ are strongly outer to approximate $F_1$ and $F_2$ with elements of the form $sf_Lb^*$ and $af_Rs$ where $a \in \A_0$ and $b\in \A^*$. However, it is easy to check that for $ F_1 \in s H^2(\M), F_2 \in H_0^2(\M)s$
 $$F_1F_2\in H^1_0(s\M s)\;\textrm{ and }\;\|F_1F_2\|_1\le \|F_1\|_2\|F_2\|_2.$$
Conversely, by  the Noncommutative Riesz Factorisation theorem \cite{MW1, S}, for any $\epsilon>0$ and any $F\in H^1_0(s\M s)$ there exist $F_1\in H^2(s\M s)\subset s H^2(\M)$ and $F_2\in H^2_0(s\M s)\subset H^2(\M)s$ such that 
 $$F=F_1F_2 \;\textrm{ and }\; \|F_1\|_2\|F_2\|_2\le \|F\|_1+\epsilon.$$
From these discussions we conclude that
  \begin{eqnarray*}
  \rho &=&\sup\{|\tau(uF)| : F \in H_0^1(s \M s), \|F\|_1\leq 1\}\\
  &=& \sup\{|\tau_{s}(uF)| : F \in H_0^1(s\M s), \tau_{s}(|F|) \leq 1\}.
 \end{eqnarray*}
The norm of the restriction of the functional $L^1(s\M s)\to\mathbb{C}:a\mapsto\tau_{s}(ua)$ to $H_0^1(s\M s)$ is by duality
precisely the norm of the equivalence class $[u]$ in the quotient space $s\M s/(H_0^1(s\M s))^\circ$. However, it is well known that 
 $$s\A s=\{a\in s\M s: \tau_{s}(ab)=0, b\in s\A_0s\}$$
(cf. e.g.,  \cite{S} ). From this fact it is now an easy exercise to see that the polar
$(H_0^1(s\M s))^\circ$ is nothing but $s\A s$. It therefore follows that
 $$\rho = \inf\{\|u-k\|_\infty : k \in s\A s\}.$$ 
The result now follows from an application of the preceding Lemma.
\end{proof}


\section{Invertibility of Toeplitz operators}


We start by recalling the definition of Toeplitz operators. Given $a\in \M$, the Toeplitz operator $T_a$  with symbol $a$ is defined  to be the map
 $$T_a:H^2(\M)\to H^2(\M):b \mapsto P_+(ab),$$
where $P_+$  denotes the orthogonal projection from $L^2(\M)$ onto $H^2(\M)$. Our basic reference for Toeplitz operators in this context is \cite{MW} (see also \cite{Pr}).  

We will characterise the symbols of invertible Toeplitz operators. We point out that these results are new even for the matrix-valued case.  In achieving this characterisation, we will follow the same basic strategy as Devinatz \cite{Dev} in his remarkable solution of this problem in the classic setting. Our first result essentially reduces the problem to that of characterising invertible Toeplitz operators with unitary symbols.

\begin{theorem}\label{Toep1}
 Let $a\in \M$ be given. A necessary and sufficient condition for $T_a$ to be invertible is that it can be written in the form $a=uk$ where $k \in {\A}^{-1}$, and $u\in \M$ is a unitary for which $T_u$ is invertible.
\end{theorem}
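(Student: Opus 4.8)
The plan is to prove both implications, with essentially all the work concentrated in the necessity, and within that in the single step that invertibility of $T_a$ forces $a$ to be invertible in $\M$. Throughout I would use two elementary facts about Toeplitz operators: first, $T_a^*=T_{a^*}$, which is immediate from $\langle T_ah,h'\rangle=\langle ah,h'\rangle$ for $h,h'\in H^2(\M)$; and second, $T_uT_k=T_{uk}$ whenever $k\in\A$, since $k\in\A$ gives $kH^2(\M)\subseteq H^2(\M)$, so that $T_k$ is just left multiplication by $k$ on $H^2(\M)$. \emph{Sufficiency} is then immediate: if $a=uk$ with $k\in\A^{-1}$ and $T_u$ invertible, then $T_k$ is invertible with inverse $T_{k^{-1}}$ (left multiplication by $k^{-1}\in\A$), so $T_a=T_uT_k$ is a product of invertibles.

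The crux of the \emph{necessity} is to show that $T_a$ invertible implies $a\in\M^{-1}$, the noncommutative substitute for the Hartman--Wintner phenomenon. Since $T_a$ is invertible, both $T_a$ and $T_a^*=T_{a^*}$ are bounded below, so there is a $\delta>0$ with $T_a^*T_a\geq\delta\I$ and $T_aT_a^*\geq\delta\I$ on $H^2(\M)$. A direct computation yields the Toeplitz--Hankel positivity
 $$\langle(T_{a^*a}-T_{a^*}T_a)h,h\rangle=\|(I-P_+)(ah)\|_2^2\geq0,\qquad \langle(T_{aa^*}-T_aT_{a^*})h,h\rangle=\|(I-P_+)(a^*h)\|_2^2\geq0,$$
whence $T_{a^*a}\geq\delta\I$ and $T_{aa^*}\geq\delta\I$. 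Now $\langle T_{a^*a}h,h\rangle=\tau(h^*a^*ah)=\tau\big((a^*a)hh^*\big)$, and every $g\in L^1_+(\M)$ with $\Delta(g)>0$ can be written as $g=hh^*$ for a strongly outer $h\in H^2(\M)$: applying the right-handed Riesz--Szeg\"o factorisation $g^{1/2}=h_Rv_R$ as in the proof of Theorem~\ref{HS1} gives $g=g^{1/2}(g^{1/2})^*=h_Rh_R^*$. This yields $\tau\big((a^*a)g\big)\geq\delta\tau(g)$ for all $g\in L^1_+(\M)$ with $\Delta(g)>0$, and then for all $g\in L^1_+(\M)$ by the perturbation $g\mapsto g+\epsilon\I$ (which has $\Delta\geq\epsilon>0$) followed by $\epsilon\to0$. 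Testing against spectral projections and using faithfulness of $\tau$ forces $a^*a\geq\delta\I$, and the same argument with $T_{aa^*}$ gives $aa^*\geq\delta\I$, so $a\in\M^{-1}$.

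Once $a\in\M^{-1}$ we have $\Delta(a)>0$, so by the noncommutative Riesz--Szeg\"o theorem $a=uk$ with $u\in\M$ unitary and $k$ a strongly outer element of $\A$. Since $k=u^*a$ is invertible in $\M$, I would next check that an outer, $\M$-invertible $k$ is automatically invertible in $\A$: from density of $k\A$ in $H^2(\M)$ choose $a_n\in\A$ with $ka_n\to\I$ in $L^2$, and multiply by the bounded operator $k^{-1}$ to get $a_n\to k^{-1}$ in $L^2$, so $k^{-1}\in\M\cap H^2(\M)=\A$; hence $k\in\A^{-1}$. Finally $T_u=T_aT_{k^{-1}}$ is a product of invertibles and so is invertible, which exhibits $a=uk$ in the required form and completes the necessity. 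The main obstacle is the crux step of the second paragraph; the delicate point is to pass from an operator inequality on $H^2(\M)$ to the spectral inequality $a^*a\geq\delta\I$ on all of $L^2(\M)$, and the Riesz--Szeg\"o realisation $g=hh^*$ of positive-determinant densities is precisely the tool that bridges this gap.
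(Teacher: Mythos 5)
Your proof is correct, but the heart of it runs along a genuinely different track from the paper's. The paper never establishes $a\in\M^{-1}$ head-on: it first gets $\Delta(a)>0$ from a single equation $ag=\I+h^*$ via the generalised Jensen inequality, factors $|a|^{1/2}=vf$ with $f$ outer in $\A$, shows $T_f$ is invertible by combining the left inverse $(T_a)^{-1}T_b$ (closed range) with the density of $f\A$ (dense range), deduces that $T_{|a|}=T_{f^*}T_f$ is invertible, and only then gets strict positivity of $|a|$ by quoting the spectral inclusion $\sigma(|a|)\subset\sigma(T_{|a|})$ of Marsalli--West; finally it invokes Arveson's factorisation theorem to produce $k\in\A^{-1}$ with $|a|=|k|$. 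You instead prove the ellipticity of the symbol directly: the Toeplitz--Hankel positivity gives $T_{a^*a}\ge T_a^*T_a\ge\delta\I$ on $H^2(\M)$, and the right-handed Riesz--Szeg\"o realisation $g=h_Rh_R^*$ of every positive-determinant density transfers this to $\tau((a^*a)g)\ge\delta\tau(g)$ on all of $L^1_+(\M)$ (after the $\epsilon\I$ perturbation), whence $a^*a\ge\delta\I$ and likewise $aa^*\ge\delta\I$. This is a clean noncommutative Hartman--Wintner argument that bypasses both the spectral inclusion theorem and Arveson's factorisation, the latter being replaced by your elementary observation that an outer $k\in\A$ with $k^{-1}\in\M$ has $k^{-1}\in\M\cap H^2(\M)=\A$. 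What the paper's route buys is that it isolates the invertibility of $T_f$ for the outer factor $f$ as an intermediate step and leans on named results already in the Toeplitz literature; what yours buys is self-containedness and a quantitative lower bound $|a|\ge\delta^{1/2}\I$ obtained at the outset. The only points you should make explicit are the citations for the two standard facts you use: the right-handed Riesz--Szeg\"o factorisation (available from \cite{BL3, bx}, and used in exactly this form in the proof of Theorem \ref{HS1}) and the identification $\M\cap H^2(\M)=\A$ (which follows from the annihilator description $\A=\{a\in\M:\tau(ab)=0,\ b\in\A_0\}$ quoted from \cite{S}).
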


Suppose that $a\in\M$ is indeed of the form $a=uk$ where $k \in {\A}^{-1}$, and $u\in \M$ is a unitary. It is a simple exercise to see that then $T_k$ is invertible with inverse $T_{k^{-1}}$. Since $T_aT_{k^{-1}}=T_u$ and $T_uT_k=T_a$, it is now clear that $T_a$ will then be invertible if and only if $T_u$ is invertible.

\begin{proof}
The sufficiency of the stated condition was noted in the above discussion. To see the necessity, assume $T_a$ to be invertible. There must therefore exist some $g\in H^2(\M)$ so that $T_ag=\I$. This in turn can only be true if there exists some $h\in H^2_0(\M)$ so that $ag=\I+h^*$. By the generalised Jensen inequality \cite[3.3]{BL3} we have that 
 $$\Delta(a)\Delta(g) = \Delta(ag) =\Delta(\I+h^*) \geq \Delta(\Phi(\I+h^*)) = \Delta(\I)=1.$$
Clearly we then have that $\Delta(|a|^{1/2}) = \Delta(a)^{1/2} > 0$. So by the noncommutative Riesz-Szeg\"o theorem \cite[4.14]{BL3}, there must exist an outer element $f\in H^2(\M)$ and a unitary $v$ so that $|a|^{1/2}=vf$. (Note then that $f\in\M$, so $f$ must belong to $\A$ too.) Let $w$ be the unitary in the polar decomposition $a=w|a|$, and consider $b=w|a|^{1/2}v$. Notice that by construction $bf=a$. Thus $T_bT_f=T_a$. We will use this formula to show that $T_f$ is invertible, from which the result will then follow.

Firstly note that the injectivity of $T_a$ combined with the above equality, ensures that $T_f$ is injective. Next notice that the equality $T_bT_f=T_a$ ensures that $(T_a)^{-1}T_b$ is a left inverse for $T_f$. So $T_f$ must have a closed range. However since $f$ is outer, we also have that $[f\A]_2=H_2(\M)$. Since $f\A \subset T_f(H_2(\M))$, these two facts ensure that the range of $T_f$ is all of $H_2(\M)$. Hence $T_f$ must be invertible. 

But if $T_f$ is invertible, then so is $T_f^*=T_{f^*}$. Since  $T_{f^*}T_f = T_{|f|^2} =T_{|a|}$, the operator $T_{|a|}$ must be invertible. Since $\sigma(|a|) \subset \sigma(T_{|a|})$ by Theorem 3.5 of \cite{MW}, we must have that $0\not\in \sigma(|a|)$. In other words $|a|$ must be strictly positive. But if $|a|$ is strictly positive, then by Arveson's factorisation theorem there exists some $k\in \A^{-1}$ with $|a|=|k|$. Finally let $w_0$ be the unitary in the polar form $k = w_0|k|$. Then  $a=ww_0^*k$, which proves the theorem with $u=ww_0^*$.
\end{proof}

Our next step in achieving the desired characterisation, is to present some necessary structural information regarding unitaries $u$ for which $T_u$ is invertible. We then subsequently use this structural information to obtain a characterisation of invertibility in terms of positive angle.

\begin{lemma}\label{invtoep}
 Let $u\in \M$ be a unitary. A necessary condition for $T_u$ to be invertible is that it is of the form $u=(g_1^*)^{-1}dg_0^{-1}$  where $g_0,g_1$ are strongly outer elements of $H^2(\M)$ and $d$ a strongly outer element of $L^2(\D)$ related by the conditions that 
 $$d=\Phi(g_0)=\Phi(g_1^*),\quad dg^{-1}_0, d^*g^{-1}_1\in H^2(\M)\quad\mbox{and}\quad g_0^*g_0=d^*(g_1^*g_1)^{-1}d.$$
\end{lemma}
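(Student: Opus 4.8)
Since only necessity is asserted, the plan is to assume $T_u$ invertible, extract two witnesses $g_0,g_1$ from $T_u$ and its adjoint, recognise $d=\Phi(g_0)$ as the ``diagonal part'' of the product $g_1^*ug_0$, and then let the Fuglede--Kadison determinant force the outerness. Concretely, since $T_u$ is invertible, $g_0:=T_u^{-1}(\I)\in H^2(\M)$ satisfies $P_+(ug_0)=\I$, which by the orthogonal decomposition $L^2(\M)=H^2(\M)\oplus(H^2_0(\M))^*$ means $ug_0=\I+k^*$ for some $k\in H^2_0(\M)$. As $T_u^*=T_{u^*}$, the operator $T_{u^*}$ is invertible as well, so $g_1:=T_{u^*}^{-1}(\I)\in H^2(\M)$ satisfies $u^*g_1=\I+\ell^*$ with $\ell\in H^2_0(\M)$. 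Writing $g_1=u(\I+\ell^*)$ gives $g_1^*u=\I+\ell$, and writing $g_0=u^*(\I+k^*)$ gives $g_0^*u^*=\I+k$.

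Next I would analyse $d:=g_1^*ug_0$ from two sides. On one hand $d=(\I+\ell)g_0=g_0+\ell g_0$ with $\ell g_0\in H^1_0(\M)$, so $d\in H^1(\M)$ and $\Phi(d)=\Phi(g_0)$; on the other hand $d=g_1^*(\I+k^*)=g_1^*+(kg_1)^*$ with $kg_1\in H^1_0(\M)$, so $d\in(H^1(\M))^*$ and $\Phi(d)=\Phi(g_1^*)$. Since $H^1(\M)\cap(H^1(\M))^*=L^1(\D)$ (an element of this intersection differs from its own $\Phi$-image by a member of $H^1_0(\M)\cap(H^1_0(\M))^*=\{0\}$, the latter vanishing because $\A+\A^*$ is weak* dense in $\M$), it follows that $d=\Phi(d)=\Phi(g_0)=\Phi(g_1^*)$. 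As $g_0\in L^2(\M)$ and the conditional expectation is an $L^2$-contraction, in fact $d=\Phi(g_0)\in L^2(\D)$.

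I would then pin down the determinants. The generalised Jensen inequality gives $\Delta(g_0)=\Delta(ug_0)=\Delta(\I+k^*)\ge\Delta(\Phi(\I+k^*))=\Delta(\I)=1$, and likewise $\Delta(g_1)\ge1$. Invoking multiplicativity of the Fuglede--Kadison determinant for $\tau$-measurable operators (cf. \cite{Br,HS}) together with $\Delta(u)=1$ yields $\Delta(d)=\Delta(g_1^*ug_0)=\Delta(g_1)\Delta(g_0)$. Comparing this with Jensen's bound $\Delta(d)=\Delta(\Phi(g_0))\le\Delta(g_0)$ forces $\Delta(g_1)\le1$, hence $\Delta(g_1)=1$ and $\Delta(\Phi(g_0))=\Delta(g_0)$; symmetrically $\Delta(g_0)=1$ and $\Delta(\Phi(g_1^*))=\Delta(g_1^*)$. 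By the outerness criterion recalled in Section~2, the equalities $\Delta(g_0)=\Delta(\Phi(g_0))>0$ and $\Delta(g_1)=\Delta(\Phi(g_1))>0$ show that $g_0,g_1$ are strongly outer, while $\Delta(d)=\Delta(g_0)\Delta(g_1)=1>0$ identifies $d$ as a strongly outer element of $L^2(\D)$. Strong outerness guarantees that $g_0,g_1$ have dense range and trivial kernel, so their inverses exist as operators affiliated with $\M$.

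Finally I would read off the stated relations inside the $*$-algebra of $\tau$-measurable operators. From $g_1^*u=\I+\ell$ and $g_0^*u^*=\I+k$ I obtain $dg_0^{-1}=g_1^*u=\I+\ell\in H^2(\M)$ and $d^*g_1^{-1}=g_0^*u^*=\I+k\in H^2(\M)$, while $d=g_1^*ug_0$ rearranges to $u=(g_1^*)^{-1}dg_0^{-1}$. Substituting this factorisation into $u^*u=\I$ and cancelling the outer factors gives $g_0^*g_0=d^*(g_1^*g_1)^{-1}d$, completing the list of conditions. The step I expect to be the main obstacle is the determinant computation: justifying $\Delta(g_1^*ug_0)=\Delta(g_1)\Delta(g_0)$ for factors that are merely $L^2$ (hence unbounded) and whose product lies only in $L^1$, since it is precisely this identity that upgrades the a priori bound $\Delta(g_0)\ge1$ into the outerness equality $\Delta(g_0)=\Delta(\Phi(g_0))$; the remaining bookkeeping with affiliated operators and the identification $H^1(\M)\cap(H^1(\M))^*=L^1(\D)$ is routine.
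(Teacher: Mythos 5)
Your argument is correct and follows essentially the same route as the paper: extract $g_0,g_1$ from $T_u^{-1}(\I)$ and $T_{u^*}^{-1}(\I)$, identify $d=g_1^*ug_0\in H^1(\M)\cap H^1(\M)^*=L^1(\D)$ with $d=\Phi(g_0)=\Phi(g_1^*)$, and combine the generalised Jensen inequality with multiplicativity of the Fuglede--Kadison determinant to force strong outerness. The multiplicativity step you flag as the main obstacle is precisely the tool the paper itself invokes (it derives $\Delta(g_0)^2=\Delta(d)^2\Delta(g_1)^{-2}$ from $g_0^*g_0=d^*(g_1^*g_1)^{-1}d$, which even involves the unbounded affiliated operator $(g_1^*g_1)^{-1}$), and it is licensed by the extension of $\Delta$ to $\tau$-measurable operators in \cite{Br,HS}, every operator affiliated to a finite von Neumann algebra being $\tau$-measurable.
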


\begin{proof}
Let $u\in \M$ be a unitary for which $T_u$ is invertible. Since $T_u^*=T_{u^*}$ is then also invertible, it follows that there must exist $g_0, g_1 \in H^2(\M)$ so that $T_ug_0=\I=T_{u^*}g_1$. This in turn means that there exist $h_0,h_1 \in H^2_0(\M)$ with
 $$ug_0=\I+h^*_0,\qquad u^*g_1=\I+h^*_1.$$ 
Notice that we may then apply the generalised Jensen inequality \cite[3.3]{BL3} to conclude that 
 $$\Delta(g_0)=\Delta(u)\Delta(g_0)=\Delta(ug_0)\geq \Delta(\I)=1.$$ 
Similarly $\Delta(g_1)\geq 1$. By \cite[4.2 \& 4.15]{BL3} this means that both $g_0$ and $g_1$ are injective with dense range, and hence that $g_0^{-1}$ and $g_1^{-1}$ exist as affiliated operators. On the other hand, we have that
  $$g^*_1ug_0=g^*_1(\I+h^*_0)\in H^1(\M)^*\quad\textrm{and}\quad g_0^*u^*g_1=g_0^*(\I+h^*_1)\in H^1(\M)^*.$$ 
 Hence 
 $$g^*_1ug_0\in H^1(\M)\cap H^1(\M)^* = L^1(\D).$$
If we denote this element by $d$, it follows that $u$ is of the form $u=(g_1^*)^{-1}d\,g_0^{-1}$. It is then clear that $d^*(g_1^*g_1)^{-1}d=g_0^*g_0$.

It remains to show that $g_0$ and $g_1$ are outer and that $d=\Phi(g_0)=\Phi(g_1^*)$. To see this notice that since $g_1^*\in H^2(\M)^*$ and $ug_0=\I+h^*_0\in H^2(\M)^*$, we have that 
 $$d =\Phi(d)=\Phi(g^*_1ug_0) = \Phi(g^*_1(\I+h^*_0)) =\Phi(g^*_1)\Phi(\I+h^*_0)=\Phi(g_1^*).$$ 
Similarly, $d=\Phi(g_0)$. (Since $\Phi$ maps $H^2(M)$ onto $L^2(\mathcal{D})$, this equality also shows that $d$ is in fact in $L^2(\mathcal{D})$, and not just $L^1(\mathcal{D})$.)  It now follows from the equality $g_0^*g_0=d^*(g_1^*g_1)^{-1}d$, that 
 $$\Delta(g_0)^2=\Delta(g_0^*g_0)=\Delta(d^*(g_1^*g_1)^{-1}d)=\Delta(d^*)^2\Delta(g_1)^{-2}=\Delta(\Phi(g_1))^2\Delta(g_1)^{-2}.$$
Since as was shown earlier we have that $\Delta(g_0)\geq 1$, it therefore follows that $0< \Delta(g_1)\leq \Delta(\Phi(g_1))$. If we combine this with the generalised Jensen inequality \cite[3.3]{BL3}, we obtain $0< \Delta(g_1)=\Delta(\Phi(g_1))$. Similarly, $0< \Delta(g_0)=\Delta(\Phi(g_0))$. Thus by \cite[Theorem 4.4]{BL3}, both $g_0$ and $g_1$ are strongly outer.
\end{proof}

When combined with Theorem \ref{Toep1}, the following lemma characterises the invertibility of Toeplitz operators in terms of positive angle. If we further combine this lemma with the noncommutative Helson-Szeg\"o theorem obtained in the previous section, we end up with the promised structural characterisation of invertible Toeplitz operators with unitary symbols.

\begin{lemma}
Let $u\in \M$ be a unitary of the form described in the previous lemma. Then $T_u$ is invertible if and only if ${\A}^*$ and ${\A}_0$ are at positive angle with respect to the functional $\tau(w\cdot)$, where $w=g_0^*g_0=d^*(g_1^*g_1)^{-1}d$.
\end{lemma}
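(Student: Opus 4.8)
The plan is to realise the angle between $\A^*$ and $\A_0$ in $L^2(\omega)$, where $\omega=\tau(w\cdot)$ with $w=g_0^*g_0$, as the angle between two concrete closed subspaces of the \emph{unweighted} space $L^2(\M)$, and then to recognise the resulting splitting of $L^2(\M)$ as precisely the one that governs invertibility of $T_u$. First I would record the basic geometry. Since $g_0$ is strongly outer it has trivial kernel, so $w=g_0^*g_0$ has full support and $\omega$ is faithful; moreover for $a\in\M$ traciality gives $\|a\|_\omega^2=\tau(wa^*a)=\tau(g_0a^*ag_0^*)=\|ag_0^*\|_2^2$. Hence $a\mapsto ag_0^*$ is isometric for $\|\cdot\|_\omega$, and because $g_0^*$ has dense range it extends to a unitary $J\colon L^2(\omega)\to L^2(\M)$ carrying the canonical copies of $\A^*$ and $\A_0$ onto $\A^*g_0^*$ and $\A_0g_0^*$.

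Next I would convert the positive–angle hypothesis into a direct–sum statement. The subdiagonal axioms give $\A+\A^*=\A_0+\A^*$ (using $\D\subseteq\A^*$), which is weak$^*$-dense in $\M$, so $\overline{\A^*}^\omega+\overline{\A_0}^\omega$ is dense in $L^2(\omega)$. Since positive angle forces this algebraic sum to be closed with trivial intersection (the estimate $\|v+v'\|^2\ge(1-\rho)(\|v\|^2+\|v'\|^2)$), the condition $\rho<1$ is equivalent to the topological direct sum $L^2(\omega)=\overline{\A^*}^\omega+\overline{\A_0}^\omega$. Transporting by $J$, this becomes a topological direct sum decomposition of $L^2(\M)$ whose summands I must identify. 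Outerness of $g_0$ gives $\overline{\A^*g_0^*}=\overline{(g_0\A)^*}=H^2(\M)^*$. For the second summand I would use the identity $g_0^*=d^*g_1^{-1}u$ coming from $d=g_1^*ug_0$, together with $g_0^*g_0=d^*(g_1^*g_1)^{-1}d=(d^*g_1^{-1})(d^*g_1^{-1})^*$, to see that $k:=d^*g_1^{-1}\in H^2(\M)$ satisfies $|k^*|=|g_0|$, whence $\Delta(k)=\Delta(g_0)>0$ and $k$ is strongly outer; from this $\overline{\A_0g_0^*}=\overline{\A_0k}\,u=H^2_0(\M)u$. Applying the antilinear isometric adjoint operation and then left multiplication $M_u$ by the unitary $u$ converts $L^2(\M)=H^2(\M)^*+H^2_0(\M)u$ into $L^2(\M)=uH^2(\M)+H^2_0(\M)^*$, all steps being reversible.

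Finally I would match this decomposition to the operator. Writing $T_u=P_+M_u\big|_{H^2(\M)}$, with $M_u$ unitary on $L^2(\M)$, and recalling that $H^2_0(\M)^*$ is exactly the orthogonal complement of $H^2(\M)$, a bounded projection onto $uH^2(\M)$ along $H^2_0(\M)^*$ is readily converted into a two-sided inverse of $T_u$ (triviality of $uH^2(\M)\cap H^2_0(\M)^*$ gives injectivity, while the projection supplies surjectivity and boundedness below), and conversely an invertible $T_u$ manufactures such a projection. Chaining the equivalences gives $\rho<1\iff L^2(\M)=uH^2(\M)+H^2_0(\M)^*\iff T_u$ invertible, which is the assertion.

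I expect two points to require genuine care. The more serious is the density identification $\overline{\A_0k}=H^2_0(\M)$: one must show that left multiplication of $\A_0$ by the strongly outer element $k=d^*g_1^{-1}$ stays dense in $H^2_0(\M)$, and this has to be argued despite $g_1^{-1}$ being only an affiliated (unbounded) operator and despite products of two $L^2$-elements a priori landing only in $L^1(\M)$; this is where the outer theory of \cite{BL3,bx} must be invoked with the determinant identities above. The second, more structural, point is the passage in $L^2(\omega)$ from ``positive angle'' to a genuine topological complementation, where the weak$^*$-density of $\A_0+\A^*$ is exactly what furnishes the surjectivity that positive angle alone (which only yields that $T_u$ is bounded below) does not provide.
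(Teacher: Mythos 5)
Your argument is correct in outline and takes a genuinely more structural route than the paper's. The paper never leaves the weighted picture: it proves the forward implication by restricting the lower bound for $T_u$ to vectors $g_0a$ and using $(g_1^*)^{-1}db^*\in H^2_0(\M)^*$ to absorb $\A_0$, and it proves the converse in two separate steps, namely dense range (an orthogonality computation) and boundedness below (approximating $(P_+-{\rm Id})[(g_1^*)^{-1}da]$ by elements $(g_1^*)^{-1}db_n^*$). You instead transport $L^2(\omega)$ unitarily onto $L^2(\M)$ via $a\mapsto ag_0^*$, identify the images of $\A^*$ and $\A_0$ as $H^2(\M)^*$ and $H^2_0(\M)u$, and invoke the standard equivalence between invertibility of $T_u$ and the topological splitting $L^2(\M)=uH^2(\M)\oplus H^2_0(\M)^*$. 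This buys a cleaner conceptual picture and merges the two halves of the paper's converse into a single statement; the price is that you must supply the density of $\A_0+\A^*$ in $L^2(\omega)$ (which the paper's formulation gets for free) and the same two technical facts the paper needs: outerness of $g_0$ for the first subspace, and strong outerness of $k=d^*g_1^{-1}$ together with $[\A_0k]_2=H^2_0(\M)$ for the second.

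The one step that is wrong as written is your justification of strong outerness of $k$. From $kk^*=d^*(g_1^*g_1)^{-1}d=g_0^*g_0$ you correctly get $\Delta(k)=\Delta(g_0)>0$, but a positive determinant alone does not make an element outer (classically, an inner-times-outer function has the same modulus, hence the same determinant, as its outer part without being outer). What is needed is $\Delta(k)=\Delta(\Phi(k))>0$, and computing $\Phi(k)$ forces you to use the hypothesis $d=\Phi(g_1^*)$, which your sketch never invokes: from $g_1^*k^*=d$ and the multiplicativity of $\Phi$ on such products one gets $d=\Phi(g_1^*)\Phi(k^*)=d\,\Phi(k^*)$, hence $\Phi(k^*)=\I$, while $\Delta(k)=\Delta(d)\Delta(g_1)^{-1}=\Delta(\Phi(g_1))\Delta(g_1)^{-1}=1$ by strong outerness of $g_1$; so $\Delta(k)=\Delta(\Phi(k))=1$ and \cite[Theorem 4.4]{BL3} applies. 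This is precisely the computation the paper performs. Once that is in place, $[\A k]_2=H^2(\M)$ yields $[\A_0k]_2=[\A_0\A k]_2=[\A_0H^2(\M)]_2=H^2_0(\M)$, and the rest of your argument goes through.
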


\begin{proof}
First suppose that $T_u$ is invertible. For any $a\in \A$ the element $g_0a$ will belong to $H^2(\M)$. So the invertibility of $T_u$ ensures that we can find a constant $K > 0$ so that 
 $$\|g_0a\|_2 \leq K\|T_u(g_0a)\|_2,\quad a\in \A.$$
Recall that by Lemma \ref{invtoep} $u$ is of the form $u=(g_1^*)^{-1}dg_0^{-1}$. Thus the former inequality translates to 
 $$\|g_0a\|_2 \leq K\|P_+((g_1^*)^{-1}da)\|_2,\quad a\in \A.$$ 
Now observe that for any $b\in \A_0$, the element $(g_1^*)^{-1}db^*$ will belong to $H^2(\M)^*\A_0^* \subset H^2_0(\M)^*$. Hence 
 $$P_+((g_1^*)^{-1}da) = P_+\big((g_1^*)^{-1}da + (g_1^*)^{-1}db^*\big).$$ 
If we now write $\|f\|_w$ for $\tau(wf^*f)^{1/2}$, then for any $a\in \A$ and $b\in \A_0$ we have that
\begin{eqnarray*}
\|a^*\|_w &=&\tau(a^*wa)^{1/2}=\|g_0a\|_2\\
&\leq& K\|P_+\big((g_1^*)^{-1}da + (g_1^*)^{-1}db^*\big)\|_2\\
&\leq& K\|(g_1^*)^{-1}d(a + b^*)\|_2\\
&=& K\tau((a^*+b)w(a+b^*))\\
&=& K\|a^*+b\|_w
\end{eqnarray*}
Thus ${\A}^*$ and $\A_0$ are at positive angle  with respect to the functional $\tau(w\cdot)$.

Conversely, suppose that $\A^*$ and $\A_0$ are at positive angle with respect to the functional $\tau(w\cdot)$. We first show that $T_u$ has dense range, and hence that it will be invertible whenever it is bounded below. Let $a_0\in H^2(\M)$ be orthogonal to $T_u(H^2(\M))$. We will show that $a_0$ must then be the zero vector. Given $a\in\A$, the orthogonality of $a_0$ to  $T_u(H^2(\M))$ together with the fact that $u=(g_1^*)^{-1}dg_0^{-1}$, ensures that
\begin{eqnarray*}
0 &=&\langle T_u(g_0a), a_0\rangle = \tau(a_0^*T_u(g_0a))\\
&=& \tau(a_0^*P_+((g_1^*)^{-1}da))\\
&=& \tau(a_0^*(g_1^*)^{-1}da).
\end{eqnarray*}
However, as was noted in the first part of the proof, for any $b\in\A_0$ we  have that 
 $$a_0^*(g_1^*)^{-1}db^*\in H^2_0(\M)^*, $$ 
which implies  that 
 $$\tau(a_0^*(g_1^*)^{-1}db^*)=\tau(\Phi(a_0^*(g_1^*)^{-1}db^*))=0.$$ 
Thus 
 $$\tau(a_0^*(g_1^*)^{-1}d(a+b^*))=0  \quad\mbox{for all}\quad  a\in\A, b\in \A_0.$$
Hence $d^*g_1^{-1}a_0=0$, so $a_0=0$.

It remains to show that $T_u$ is bounded below whenever ${\A}^*$ and $\A_0$ are at positive angle with
respect to the functional $\tau(w\cdot)$. Hence assume that there exists a constant $B>0$ so that 
 $$\|a^*\|_w\leq B\|a^*+b\|_w \quad\mbox{for all}\quad a\in \A, b\in \A_0.$$
Since by assumption we have that $d=\Phi(g_1^*)$, and since
both $g_1^*$ and $(g_1^*)^{-1}d$ belong to $H^2(\M)^*$, it follows that
 $$d=\Phi(d)=\Phi(g_1^*[(g_1^*)^{-1}d])=\Phi(g_1^*)\Phi((g_1^*)^{-1}d)=d\Phi((g_1^*)^{-1}d). $$
This yields that $\Phi((g_1^*)^{-1}d)=\I$. Now since $g_1^*$ is by assumption strongly outer, we have that $\Delta(g_1) =\Delta(\Phi(g_1))>0$ by \cite[Theorem 4.4]{BL3}. Consequently 
 $$\Delta(d) = \Delta(g_1^*)\Delta((g_1^*)^{-1}d)=\Delta(\Phi(g_1^*))\Delta((g_1^*)^{-1}d)=\Delta(d)\Delta((g_1^*)^{-1}d).$$
Thus since $\Delta(d) >0$ by the strong outerness of $d$, we must have that 
 $$\Delta((g_1^*)^{-1}d)=1=\Delta(\I)=\Delta(\Phi((g_1^*)^{-1}d)).$$ 
Hence by \cite[Theorem 4.4]{BL3} $(g_1^*)^{-1}d$ is a strongly outer element of $H^2(\M)^*$. But this ensures that $[(g_1^*)^{-1}d\A_0^*]=H^2_0(\M)^*$. Hence for any fixed $a\in \A$, we may select a sequence $\{b_n\}\subset A_0$ so
that 
 $$(g_1^*)^{-1}db_n^* \to (P_+-{\rm Id})[(g_1^*)^{-1}da]\in H^2_0(\M)^*\;\textrm{ in } L^2(\M).$$ 
Finally recall that by assumption $|g_0|=|(g_1^*)^{-1}d|$. So given any $a\in \A$, with
$\{b_n\}\subset \A_0$ the sequence as constructed above, we have that
\begin{eqnarray*}
\|g_0a\|_2 &=& \|a^*\|_w
\le B\|a^*+b_n\|_w\\
&=& B\|g_0(a+b_n^*)\|_2\\
&=& B\||g_0|(a+b_n^*)\|_2\\
&=& B\||(g_1^*)^{-1}d|(a+b_n^*)\|_2\\
&=& B\|(g_1^*)^{-1}d(a+b_n^*)\|_2.
\end{eqnarray*}
Letting $n\to \infty$ now yields $$\|g_0a\|_2\leq B\|P_+[(g_1^*)^{-1}da]\|=B\|T_u(g_0a)\|_2\quad\mbox{for any}\quad a\in \A.$$Finally note that by assumption $g_0$ is an outer element of $H^2(\M)$. With $g_0\A$ therefore being dense in $H^2(\M)$, the above inequality extends by continuity to the claim that 
 $$\|a\|_2\leq B\|T_u(a)\|_2\quad\mbox{for any}\quad a\in H^2(\M).$$
Thus $T_u$ is invertible.
\end{proof}

\begin{definition}
Given $f\in \M$ we define the {\em Hankel operator} with symbol $f$ by means of the prescription 
 $$\mathcal{H}_f:H^2(\M)\to H^2(\M)^*: x \mapsto P_-(fx),$$
where $P_-$ is the orthogonal projection from $L^2(\M)$ onto $H^2(\M)^*$.
\end{definition}

The following lemma is entirely elementary.

\begin{lemma}\label{Hankel}
Let $f\in \M$ be given. Then $$\|\mathcal{H}_f|_{H^2_0}\| = \sup\{|\tau(fF)|: F\in H^1_0(\M), \tau(|F|)\leq 1\}.$$
\end{lemma}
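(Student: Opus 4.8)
The plan is to unfold the operator norm of $\mathcal{H}_f|_{H^2_0}$ as a bilinear supremum and then recognise, after exploiting the self-adjointness of $P_-$, that the resulting expression is exactly a trace pairing of the form $\tau(fxy^*)$; the claimed equality then reduces to the fact that $H^1_0(\M)$ factors as the product $H^2_0(\M)\cdot H^2(\M)$ with near-optimal norm control.

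First I would write, for $x\in H^2_0(\M)$ and $y\in H^2(\M)^*$,
$$\langle\mathcal{H}_f x,\,y\rangle=\langle P_-(fx),\,y\rangle=\langle fx,\,P_-y\rangle=\langle fx,\,y\rangle=\tau(y^*fx)=\tau(fxy^*),$$
where the second and third equalities use that $P_-$ is self-adjoint and fixes $y\in H^2(\M)^*$, and the last uses the trace property. Unfolding the operator norm as a supremum over the unit balls of the source $H^2_0(\M)$ and the target $H^2(\M)^*$ therefore gives
$$\|\mathcal{H}_f|_{H^2_0}\|=\sup\{|\tau(fxy^*)|: x\in H^2_0(\M),\ y\in H^2(\M)^*,\ \|x\|_2\le 1,\ \|y\|_2\le 1\}.$$

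For the inequality $\le$, given such $x$ and $y$ I would set $F=xy^*$. Since $y^*\in H^2(\M)$, we have $F\in H^2_0(\M)\cdot H^2(\M)\subseteq H^1_0(\M)$, and by the noncommutative H\"older inequality $\tau(|F|)=\|F\|_1\le\|x\|_2\|y\|_2\le 1$. As $\tau(fxy^*)=\tau(fF)$, every admissible pairing on the left is dominated by the right-hand supremum, which yields $\le$. For the reverse inequality, I would start from an arbitrary $F\in H^1_0(\M)$ with $\tau(|F|)\le 1$ and invoke the Noncommutative Riesz Factorisation theorem (exactly as already used in the proof of Theorem \ref{HS1}) to produce, for each $\epsilon>0$, a factorisation $F=xy^*$ with $x\in H^2_0(\M)$, $y^*\in H^2(\M)$ and $\|x\|_2\|y\|_2\le\|F\|_1+\epsilon\le 1+\epsilon$. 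Then $\tau(fF)=\tau(fxy^*)=\langle\mathcal{H}_f x,\,y\rangle$ is bounded in modulus by $\|\mathcal{H}_f|_{H^2_0}\|\,\|x\|_2\|y\|_2\le(1+\epsilon)\|\mathcal{H}_f|_{H^2_0}\|$; letting $\epsilon\to 0$ and taking the supremum over $F$ produces $\ge$.

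The computation is genuinely elementary, as the statement advertises: the only substantive input is the Riesz factorisation with near-optimal norm control, and the single point requiring care is to match the side on which the analytic factor sits (namely the factorisation $H^1_0(\M)=H^2_0(\M)\cdot H^2(\M)$) to the shape $xy^*$ demanded by the pairing, together with the harmless $\epsilon$-bookkeeping when passing back to the supremum. I do not expect a serious obstacle.
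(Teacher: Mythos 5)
Your proposal is correct and follows essentially the same route as the paper: unfold the restricted operator norm as a bilinear supremum over the unit balls of $H^2_0(\M)$ and $H^2(\M)^*$, use the orthogonality (equivalently, self-adjointness of $P_-$ together with $P_-y=y$) to replace $\langle P_-(fx),y\rangle$ by $\tau(fxy^*)$, and then identify the resulting supremum with the one over the unit ball of $H^1_0(\M)$ via H\"older in one direction and the Noncommutative Riesz Factorisation theorem in the other. Your explicit attention to which side the $H^2_0$ factor sits on matches the factorisation $H^1_0(\M)=H^2_0(\M)\cdot H^2(\M)$ implicitly used in the paper, so there is nothing to add.
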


\begin{proof}
Since for every $x\in H^2(\M)$ we have that $({\rm Id}-P_-)(x) \in H^2_0(\M)$,  it is clear that such an $({\rm Id}-P_-)(x)$ will be orthogonal to any $y\in H^2(\M)^*$. Thus $\langle P_-(fa), b\rangle=\langle fa, b\rangle$ for any $a\in H^2_0(\M)$ and $b\in H^2(\M)^*$. Thus
\begin{eqnarray*}
\|\mathcal{H}_f|_{H^2_0}\| &=& \sup\{\|P_-(fa)\|: a\in H^2_0(\M), \|a\|_2\leq 1\}\\
&=& \sup\{|\langle P_-(fa), \,b\rangle|: a\in H^2_0(\M), b\in H^2(\M)^*, \|a\|_2\leq 1, \|b\|_2\leq 1\}\\
&=& \sup\{|\langle fa,\, b\rangle|: a\in H^2_0(\M), b\in H^2(\M)^*, \|a\|_2\leq 1, \|b\|_2\leq 1\}\\
&=& \sup\{|\tau(fab^*)|: a\in H^2_0(\M), b\in H^2(\M)^*, \|a\|_2\leq 1, \|b\|_2\leq 1\}\\
&=& \sup\{|\tau(fF)|: F\in H^1_0(\M), \tau(|F|)\leq 1\}.
\end{eqnarray*}
Here the last equality follows from the Noncommutative Riesz Factorisation theorem from \cite{S} and  \cite{MW1}.
 \end{proof}

We are now ready to present our final result. When taken alongside Theorem \ref{Toep1}, this result fully characterises invertible Toeplitz operators.

\begin{theorem}
Let $u\in \M$ be a unitary of the form described in Lemma \ref{invtoep}. Then the following are equivalent:
\begin{itemize}
\item $T_u$ is invertible;
\item there exists $k\in \A$ such that $\Re(u^*k)$ is strictly positive;
\item the Hankel operator $\mathcal{H}_u$ restricted to $H^2_0(\M)$ has norm less than $1$.
\end{itemize}
\end{theorem}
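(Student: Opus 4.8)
The plan is to prove the three equivalences by showing that each of the three conditions is equivalent to the single inequality $\inf_{k\in\A}\|u-k\|_\infty<1$, drawing on the positive-angle characterisation of invertibility already obtained, the noncommutative Helson--Szeg\"o theorem (Theorem \ref{HS1}), Lemma \ref{Hankel}, and the lemma asserting that $\inf_{f\in\A}\|u-f\|_\infty<1$ holds exactly when some $k\in\A$ makes $\Re(u^*k)$ strictly positive. Throughout I write $w=g_0^*g_0=d^*(g_1^*g_1)^{-1}d$, with $g_0,g_1,d$ as in Lemma \ref{invtoep}.

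First I would establish the equivalence of the first two bullets. By the lemma relating the invertibility of $T_u$ to positive angle, $T_u$ is invertible if and only if $\rho<1$, where $\rho$ is the angle between $\A^*$ and $\A_0$ with respect to the functional $\tau(w\cdot)$. Since $g_0$ is strongly outer, $\Delta(w)=\Delta(g_0)^2>0$, so $w$ has full support, and Lemma \ref{Phisupp} then forces $s_\Phi:=s(\Phi(w))=\I$ and $\Delta_\Phi(w)=\Delta(w)>0$. The crucial step is to feed into the proof of Theorem \ref{HS1} the explicit factorisation $w=f_Ruf_L$ with $f_L=g_0$ and $f_R=d^*g_1^{-1}$: a direct computation gives $f_Ruf_L=d^*(g_1^*g_1)^{-1}d=w$, the element $f_L$ is strongly outer by hypothesis, and $f_R$ is strongly outer because its adjoint $(g_1^*)^{-1}d$ was shown to be strongly outer in $H^2(\M)^*$ while proving the positive-angle characterisation above. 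Because $s_\Phi=\I$, the partial isometry $u$ is a genuine unitary and $s_\Phi\A s_\Phi=\A$, so the duality computation carried out in the proof of Theorem \ref{HS1} — which uses only the strong outerness of the outer factors and the identity $w=f_Ruf_L$ — applies verbatim to our factorisation and yields $\rho=\inf_{k\in\A}\|u-k\|_\infty$. Invoking the lemma characterising when $\inf_{f\in\A}\|u-f\|_\infty<1$, I conclude that $\rho<1$ is equivalent to the existence of $k\in\A$ with $\Re(u^*k)$ strictly positive, which is the second bullet.

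Next I would treat the third bullet. By Lemma \ref{Hankel}, $\|\mathcal{H}_u|_{H^2_0}\|=\sup\{|\tau(uF)|:F\in H^1_0(\M),\ \tau(|F|)\le1\}$, which is precisely the norm of the functional $F\mapsto\tau(uF)$ obtained by restricting $u\in\M=(L^1(\M))^\star$ to the subspace $H^1_0(\M)\subset L^1(\M)$. By the standard predual duality this norm equals the distance from $u$ to the annihilator of $H^1_0(\M)$ in $\M$, and that annihilator is exactly $\A$, since $\A=\{a\in\M:\tau(ab)=0,\ b\in\A_0\}$. Hence $\|\mathcal{H}_u|_{H^2_0}\|=\inf_{k\in\A}\|u-k\|_\infty$, so the third bullet is again equivalent to $\inf_{k\in\A}\|u-k\|_\infty<1$, and therefore to the second bullet by the lemma just quoted. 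Chaining the two equivalences closes the circle.

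The step I expect to demand the most care is checking that the given $u$ really is the unitary delivered by the Helson--Szeg\"o machinery, i.e.\ that $w=f_Ruf_L$ genuinely has the form required in Theorem \ref{HS1}. Matching polar decompositions, one writes $g_0=v_0\,w^{1/2}$ and $(g_1^*)^{-1}d=v_p\,w^{1/2}$, using $|g_0|=|(g_1^*)^{-1}d|=w^{1/2}$; then $u=(g_1^*)^{-1}dg_0^{-1}=v_pv_0^*$, which is exactly the unitary denoted $v_Rv_L$ in the proof of Theorem \ref{HS1}, so the factorisation is legitimate and the angle computation transfers to our $u$. The one genuinely delicate point is verifying that $f_R=d^*g_1^{-1}$ is \emph{strongly} outer — not merely that $|f_R^*|^2=w$ — and here I must lean on the outerness bookkeeping attached to Lemma \ref{invtoep}; once $s_\Phi=\I$ is in hand, everything else is a routine verification.
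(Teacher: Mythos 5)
Your proposal is correct and follows essentially the same route as the paper: reduce to the positive-angle lemma, verify that $w=g_0^*g_0$ admits the factorisation $w=(d^*g_1^{-1})\,u\,g_0$ with both outer factors strongly outer and $s(\Phi(w))=\I$, read off $\rho=\inf_{k\in\A}\|u-k\|_\infty=\sup\{|\tau(uF)|:F\in H^1_0(\M),\ \tau(|F|)\le 1\}$ from the proof of Theorem \ref{HS1}, and then invoke the $\Re(u^*k)$ lemma and Lemma \ref{Hankel}. The only cosmetic difference is that you re-derive the duality identity between the supremum over $H^1_0(\M)$ and the distance to $\A$ directly, whereas the paper simply quotes it from the proof of Theorem \ref{HS1}; the substance is identical.
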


\begin{proof}
Our aim is to apply Theorem \ref{HS1}. In this regard we point out that although this theorem is formulated for norm one elements of $L^1(\M)^+$, that assumption is one of convenience and not necessity. Hence the value of $\|w\|_1$ is no essential obstruction to applying this theorem. Next observe that the fact that $w=g_0^*g_0$, not only ensures that $\Delta(w)=\Delta(g_0)^2>0$, but also that $w$ is injective.  Thus by Lemma \ref{Phisupp}, $s(\Phi(w)) = \I$. We showed in the proof of the preceding Lemma that $\Delta((g_1^*)^{-1}d)=1=\Delta(\Phi((g_1^*)^{-1}d))$. Applying this fact to $d^*g_1^{-1}$ enables us to conclude from \cite[Theorem 4.4]{BL3} that $d^*g_1^{-1}$ is a strongly outer element of $H^2(\M)$. On setting $h_R=d^*g_1^{-1}$ and $h_L=g_0$, it follows that $w$ is of the form 
 $$w = d^*g_1^{-1}(g_1^*)^{-1}d = d^*g_1^{-1}[(g_1^*)^{-1}dg_0^{-1}]g_0 = h_Ruh_L$$
with $h_R$ and $h_L$ strongly outer elements of $H^2(\M)$ for which we have that 
 $$|h_L|=|g_0|=w^{1/2}\;\textrm{ and }\; |h_R^*|=|(g_1^*)^{-1}d|=|w|^{1/2}.$$ 
With all the other conditions of this theorem being satisfied, we may now conclude from Theorem \ref{HS1} that $\A$ and $\A_0^*$ are at positive angle with respect to the functional $\tau(w\cdot)$ if and only if there exists a $k\in \A$ such that $\Re(u^*k)$ is strictly positive. From the proof of Theorem \ref{HS1} we also have that $\A$ and $\A_0^*$ are at positive angle if and only if $\sup\{|\tau(fF)|: F\in H^1_0(\M), \tau(|F|)\leq 1\}<1$. The result now follows from an application of the preceding two lemmata.
\end{proof}

\begin{remark}
We point out that for any unitary $u$ of the form described in Lemma \ref{invtoep}, the condition in the third bullet of the above theorem cannot be improved in the sense that for such a unitary, $\mathcal{H}_u$ must necessarily have norm 1.  Suppose that $u$ is of the form $u=(g_1^*)^{-1}dg_0^{-1}$  where $g_0,g_1$ are strongly outer elements of $H^2(\M)$ and $d$ a strongly outer element of $L^2(\D)$, related by the conditions that $dg^{-1}_0, d^*g^{-1}_1\in H^2(\M)$ and $g_0^*g_0=d^*(g_1^*g_1)^{-1}d$. Notice that $(g_1^*)^{-1}d = (d^*g_1^{-1})^* \in H_2(\M)^*$ must then be orthogonal to $({\rm Id}-P_-)(ug_0)$. Hence we get that
\begin{eqnarray*}
\langle \mathcal{H}_u(g_0), \,(g_1^*)^{-1}d\rangle &=& \langle ug_0, \,(g_1^*)^{-1}d\rangle\\
&=& \langle (g_1^*)^{-1}d, \,(g_1^*)^{-1}d\rangle\\
&=& \tau(d^*g_1^{-1}(g_1^*)^{-1}d)\\
&=& \tau(g_0^*g_0)^{1/2}.\tau(d^*g_1^{-1}(g_1^*)^{-1}d)^{1/2}\\
&=& \|g_0\|_2.\|(g_1^*)^{-1}d\|_2.
\end{eqnarray*}
This can clearly only be the case if $\|\mathcal{H}_u\|\geq 1$. Since we also have that $\|\mathcal{H}_u\| \leq \|u\|_\infty =1$, the claim follows.
\end{remark}

\noindent{\bf Acknowledgments.} The contributions of the first named author is based upon research supported by the National Research Foundation. Any opinion, findings and conclusions or recommendations expressed in this material, are those of the authors, and therefore the NRF do not accept any liability in regard thereto. The second named author is partially supported by ANR-2011-BS01-008-01 and NSFC grant No.  11271292.

\bigskip

\end{document}